\newcommand{\RR}{{\mathbb{R}}}
\newcommand{\NN}{{\mathbb{N}}}
\newcommand{\ZZ}{{\mathbb{Z}}}
\newcommand{\CC}{{\mathbb{C}}}
\newcommand{\at}{\mbox{$\hat{a}$}}
\newcommand{\bt}{\mbox{$\hat{b}$}}
\newcommand{\ct}{\mbox{$\hat{c}$}}
\newcommand{\dt}{\mbox{$\hat{d}$}}
\newcommand{\Pt}{\hat{p}}
\newtheorem{remark}{Remark}
\newtheorem{theorem}{Theorem}
\newtheorem{lemma}{Lemma}
\newtheorem{example}{Example}
\journal{}
\begin{document}

\begin{frontmatter}

%% Title, authors and addresses

%% use the tnoteref command within \title for footnotes;
%% use the tnotetext command for theassociated footnote;
%% use the fnref command within \author or \address for footnotes;
%% use the fntext command for theassociated footnote;
%% use the corref command within \author for corresponding author footnotes;
%% use the cortext command for theassociated footnote;
%% use the ead command for the email address,
%% and the form \ead[url] for the home page:
%\title{GaussMOP: a Matlab package computing simultaneous Gaussian quadrature rules for Multiple Orthogonal Polynomials\tnoteref{label1}}
%%% \tnotetext[label1]{}
 %\author{Teresa Laudadio\corref{cor1}\fnref{label2}}
 %\ead{teresa.laudadio@cnr.it}
 %\author{Nicola Mastronardi\corref{cor1}\fnref{label2}}
 %\ead{nicola.mastronardi@cnr.it}
 %\author{Walter Van Assche\corref{cor1}\fnref{label2}}
 %\ead{walter.vanassche@kuleuven.be}
 %\author{Paul Van Dooren\corref{cor1}\fnref{label2}}
 %\ead{paul.vandooren@uclouvain.be}
%%% \ead[url]{home page}
%%% \fntext[label2]{}
%% \cortext[cor1]{}
%% \affiliation{organization={},
%%             addressline={},
%%             city={},
%%             postcode={},
%%             state={},
%%             country={}}
%% \fntext[label3]{}

\title{A Matlab package computing simultaneous Gaussian quadrature rules for Multiple Orthogonal Polynomials} %\tnoteref{label1}}

%% use optional labels to link authors explicitly to addresses:
%% \author[label1,label2]{}
%% \affiliation[label1]{organization={},
%%             addressline={},
%%             city={},
%%             postcode={},
%%             state={},
%%             country={}}
%%
%% \affiliation[label2]{organization={},
%%             addressline={},
%%             city={},
%%             postcode={},
%%             state={},
%%             country={}}
 %\author{Teresa Laudadio\corref{cor1}\fnref{label2}}
 %\ead{teresa.laudadio@cnr.it}
 %\author{Nicola Mastronardi\corref{cor1}\fnref{label2}}
 %\ead{nicola.mastronardi@cnr.it}
 %\author{Walter Van Assche\corref{cor1}\fnref{label2}}
 %\ead{walter.vanassche@kuleuven.be}
 %\author{Paul Van Dooren\corref{cor1}\fnref{label2}}
 %\ead{paul.vandooren@uclouvain.be}
\author[IAC]{Teresa Laudadio\corref{cor1}}
\ead{teresa.laudadio@cnr.it}
\cortext[cor1]{Corresponding author}
\affiliation[IAC]{organization={Istituto per le Applicazioni del Calcolo ``Mauro Picone'', CNR},%Department and Organization
            addressline={Via Amendola 122/D}, 
            city={Bari},
            postcode={70126}, 
            country={Italy}}
						
			\author[IAC]{Nicola Mastronardi}
		%	\ead{nicola.mastronardi@cnr.it}
%\affiliation{organization={Istituto per le Applicazioni del Calcolo ``Mauro Picone'', CNR},%Department and Organization
            %addressline={Via Amendola 122/D}, 
            %city={Bari},
            %postcode={70126}, 
            %state={},
            %country={Italy}}	
						
			\author[KUL]{Walter Van Assche}
\affiliation[KUL]{organization={Department of Mathematics,
KU Leuven},%Department and Organization
            addressline={Celestijnenlaan 200B}, 
            city={Leuven},
            postcode={3001}, 
            country={Belgium}}
					
			\author[UCL]{Paul Van Dooren}
\affiliation[UCL]{organization={Department of Mathematical Engineering, Catholic University of Louvain},%Department and Organization
            addressline={Batiment Euler (A.202),
Avenue Georges Lemaitre 4}, 
            city={Louvain-la-Neuve},
            postcode={1348}, 
            country={Belgium}}													

\begin{abstract}
The aim of this  paper is to describe a Matlab package for  computing the simultaneous Gaussian quadrature rules associated with a variety of  multiple orthogonal polynomials. %  for $ r=2.$

Multiple orthogonal polynomials can be considered  as a generalization of  classical orthogonal polynomials, satisfying orthogonality constraints with respect to $ r$  different measures, with  $ r \ge 1.$  Moreover, they satisfy  $(r+2)$--term recurrence relations. In this manuscript, without loss of generality,   $ r$ is considered equal to $ 2.$ 
The so--called simultaneous Gaussian quadrature rules associated  with multiple orthogonal polynomials can be computed by solving a banded lower Hessenberg eigenvalue problem.
Unfortunately, computing the eigendecomposition of such a 
matrix turns out to be strongly ill--conditioned and the   {\tt Matlab} function {\tt balance.m} does not improve the condition of the eigenvalue problem. 
Therefore, most procedures for computing simultaneous Gaussian quadrature rules are implemented with variable precision arithmetic.
Here, we  propose a {\tt Matlab} package that  allows to reliably compute the  simultaneous Gaussian quadrature rules  in floating point arithmetic. It  makes use of a variant  of a new balancing procedure, recently  developed  by the authors of the present manuscript, that drastically reduces the condition of the  Hessenberg eigenvalue problem.
\end{abstract}

%%%Graphical abstract
%\begin{graphicalabstract}
%%\includegraphics{grabs}
%\end{graphicalabstract}

%%%Research highlights
%\begin{highlights}
%\item Research highlight 1
%\item Research highlight 2
%\end{highlights}

\begin{keyword}
%% keywords here, in the form: keyword \sep keyword
multiple orthogonal polynomials \sep simultaneous Gaussian quadrature rules \sep  banded Hessenberg eigenvalue problem
%% PACS codes here, in the form: \PACS code \sep code

%% MSC codes here, in the form: 
\MSC 33C47 \sep 65D32 \sep 65F15
%% or \MSC[2008] code \sep code (2000 is the default)

\end{keyword}

\end{frontmatter}

%% \linenumbers

%% main text
%\section{}
%\label{}

\section{Introduction}\label{sect:intro}

In this  paper, we consider the computation of simultaneous Gaussian quadrature rules associated with a variety of  multiple orthogonal polynomials (MOPs). %  for $ r=2.$
%%%%%%%%%%%%%%%%
MOPs originally appeared in Hermite-Pad\'e approximation (simultaneous
rational approximation) and number theory. Recently, they turned out to be very useful in  random matrix theory
\cite{Kuijlaars}, combinatorics \cite{Sokal} and Markov chains \cite{Manas}. Simultaneous Gaussian quadrature was
introduced in \cite{Borges94} to model computer graphics illumination, where the computation of different weighted integrals
with the same integrand function was needed. The aim was to minimize the evaluations of the integrand function and
maximize the order of the quadrature rules based on the same set of nodes. 
Simultaneous Gaussian quadrature rules associated with MOPs related to the modified Bessel functions of the first and
second kind were proposed in \cite{VanAssche2023}. Gaussian quadrature with these special weight functions (and also  with
hypergeometric or confluent hypergeometric weights and the exponential integral) requires the computation of the recurrence coefficients
of the corresponding orthogonal polynomials from the moments, which is an ill-conditioned numerical problem
(see, e.g., \cite{Gautschi1, Gautschi2}). Surprisingly, the recurrence coefficients of  MOPs
for such weights are explicitly known, so that their  numerical computation is avoided and
the quadrature formula can be computed by solving an eigenvalue problem obtained by properly arranging these recurrence coefficients into an  Hessenberg matrix, %with these recurrence coefficients usingeigenvalues and eigenvectors,
as proposed in \cite{VanAssche2005} and \cite{VanAssche2023}.
%%%%%%%%%%%%%%%%
%%%%%%%%%%%%%%%%%%%%%%%%%%%%%%%%%%%%%%%%%%%%%%%%%%%%%%%%%%%%%%%%%%%%%%%%%%%%

%
%%%%%%%%%%%%%%%%%%%%%%%%%%%%%%%%%%%%%%%%%%%%%%%%%%%%%%%%%%%%%%%%%%%%%%%%%%%%%
%The detailed knowledge of multiple orthogonal polynomials with respect to (generalized)
%hypergeometric functions has applications in random matrix theory, combinatorics, description
%of rational solutions to nonlinear differential-difference equations, such as Painlevé equations,
%number theory, among other fields. For instance, the analysis of singular values of products of
%Ginibre matrices in Refs 1,2 uses multiple orthogonal polynomials associated with weight functions
%expressed in terms of Meijer G-functions, a class of weights to which the weight (1) belongs.
%Furthermore, these polynomials are linked with the branched continued fractions introduced in3
%as the generating functions of $m $--Dyck paths, for the purpose of solving total-positivity problems
%involving combinatorially interesting sequences of polynomials. This connection, which leads to
%new results in both fields involved, will be further explored in forthcoming work. The research
%presented here enlarges and fits into studies on multiple orthogonal polynomials with respect
%to weights satisfying second-order differential equations, such as the modified Bessel function
%of second-order $K_{\nu}$ (see Refs 4,5), the modified Bessel function of first kind $I_{\nu}$ (see Refs 6,7), the
%confluent hypergeometric function (see 8).

%%%%%%%%%%%%%%%%%%%%%%%%%%%%%%%%%%%%%%%%%%%%%%%%%%%%%%%%%%%%%%%%%%%%%%%%%%%%

MOPs  are a generalization of orthogonal polynomials and can be divided into two classes: type I and type II \cite{VanAssche2005}. In this paper we focus on MOPs of type II.
Suppose $ r $ weight functions $w^{(i)}(x) \ge 0,$ with support $ \Delta^{(i)}, \; i=1,\ldots,r,$ on the real line   are given. Then, the sequence of  MOPs $ \left\{p_n(x)\right\}_{ n=0}^{\infty} $  of type II   satisfy the following orthogonality conditions \cite{VanAssche2005}:
\begin{equation}\label{eq:exactp}
\int_{\Delta^{(i)}} p_{n}(x) x^k  w^{(i)}(x)dx  =  0, \quad 0\le k \le n_{i}-1,
\end{equation}
with $ n= \sum_{i=1}^r n_i. $

Let $ \Delta = \bigcup_{i=1}^{r}\Delta^{(i)}. $ 
Two different systems of  MOPs of type II can be considered \cite{VanAssche2001,ismail05}:
\begin{enumerate}
\item
Angelesco system, where the  open intervals  $ \Delta^{(i)}$, $\; i=1,\ldots,r,$ are disjoint, i.e.,  $ \Delta^{(i)}\bigcap  \Delta^{(j)}= \emptyset$, for $ i \ne j,$ and the closed intervals $ \Delta^{(i)}$ are allowed to touch.
\item
algebraic Chebyshev system (AT system), where  %the  intervals  $ \Delta^{(i)}$,  $\; i=1,\ldots,r,$ are the same, i.e., 
$ \Delta^{(i)} =\Delta, \; i=1, \ldots,r. $ 

\end{enumerate}
%Let $ \Delta = \bigcup_{i=1}^{r}\Delta^{(i)}. $ 
Then, $ p_n(x)$ of type II  has exactly $ n $ zeros in  $ \Delta$ \cite[Th.~2]{VanAssche2001}.

A set of MOPs satisfies an $(r+2)$--term recurrence relation.
Without loss of generality, in this paper we focus on the case $r=2.$ Therefore, the set of MOPs satisfies a $4$--term recurrence relation\footnote{All the MOPS considered in the literature are monic, i.e., $ a_i =1,\; i=0,1,\ldots, n-1. $}
\begin{equation} \label{eq:MOP} 
xp_{i}(x) = a_i p_{i+1}(x)  + b_i p_{i}(x)  + c_i p_{i-1}(x)  + d_i p_{i-2}(x), \;\; i=0, \ldots, n-1,
\end{equation}
with $p_{-2}(x)=p_{-1}(x)=0.$ 
Writing (\ref{eq:MOP}) in matrix form, we obtain
$$
H_n 
\left[\begin{array}{c} 
p_0(x) \\ p_1 (x) \\ \vdots \\  p_{n-1}(x)
\end{array}\right] +
 a_{n-1}
\left[\begin{array}{c} 
0 \\ \vdots \\ 0 \\p_n (x) 
\end{array}\right] =
x
\left[\begin{array}{c} 
p_0(x) \\ p_1 (x) \\ \vdots \\  p_{n-1}(x)
\end{array}\right],
$$
where $  H_n $ is 
%
%Arranging (\ref{eq:MOP}) in matrix form, for $i=0, \ldots, n-1,$
 the $n\times n$ banded lower Hessenberg matrix with 2 sub--diagonals and one upper-diagonal~: 
\begin{equation} \label{H}
H_n := \left[\begin{array} {ccccccc} b_0 & a_0 & 0 & 0 & 0 &  \ldots & 0 \\
c_1 & b_1 & a_1 & 0 & 0 &  \ldots & 0 \\
d_2 & c_2 & b_2 & a_2 & 0 &  \ldots & 0 \\
0 & d_3 & c_3 & b_3 & a_3 &   \ddots & 0 \\
\vdots & \ddots  & \ddots   & \ddots & \ddots & \ddots & 0  \\
0 & \ldots &  0 & d_{n-2} & c_{n-2} & b_{n-2} & a_{n-2} \\
0 & \ldots &  0 & 0 & d_{n-1} & c_{n-1} & b_{n-1}  \\
\end{array} \right].
\end{equation}
%\begin{equation} \label{H}
%H_n := \left[\begin{array} {cccccccc} b_0 & a_0 & 0 & 0 & 0 & 0 & \ldots & 0 \\
%c_1 & b_1 & a_1 & 0 & 0 & 0 & \ldots & 0 \\
%d_2 & c_2 & b_2 & a_2 & 0 & 0 & \ldots & 0 \\
%0 & d_3 & c_3 & b_3 & a_3 & 0 & \ldots & 0 \\
%0 & 0 & d_4 & c_4 & b_4 & a_4 & \ldots & 0 \\
%\vdots &  &  & \ddots & \ddots & \ddots & \ddots &  \\
%0 & \ldots & 0 & 0 & d_{n-2} & c_{n-2} & b_{n-2} & a_{n-2} \\
%0 & \ldots & 0 & 0 & 0 & d_{n-1} & c_{n-1} & b_{n-1}  \\
%\end{array} \right].
%\end{equation}
The Gaussian quadrature rule associated with classical orthogonal polynomials
 can be retrieved from the eigenvalue decomposition of a symmetric tridiagonal matrix, and it is exact for polynomials of degree $2 n-1$ \cite{GolubW69}.
The theory of  simultaneous Gaussian quadrature rules  for the general case $ r >2 $ is described in  \cite{VanAssche2005}.

%It turns out that 
For $ r=2 $,  the simultaneous Gaussian quadrature rule associated with MOPs \cite{VanAssche2005},
$$ %\begin{equation}\label{eq:exactp}
\sum_{k=1}^{n}\omega_{k}^{(j)} f(x_j) =\int_{\Delta^{(i)}} f(x)   w^{(j)}(x)dx  +E_{n}^{(j)}(f),  \quad j=1,2,
$$ % \end{equation}
with $ E_{n}^{(j)}(f)=0$, if $ f $ is a polynomial of degree $ n+n_j-1, $  
can be retrieved from the eigenvalue decomposition of the matrix $ H_n $ \cite[Th.~3.2]{VanAssche2005},\cite[Th.~2]{VanAssche2023}. 
Therefore, the degree of exactness of the  simultaneous Gaussian quadrature rule  is maximal if all
 $n_j,\; j=1, 2, $ in (\ref{eq:exactp}) are equal \cite{LubAsc}. 
%Hence from now on we will use $ n_j:=m, \;i=1,\ldots,r, $ i.e., $n= rm. $  
%It is exact for polynomials of degree $\lfloor \frac{3}{2} n\rfloor-1$  \cite{Borges94,VanAssche2005,VanAssche2023}, where  $\lfloor \frac{3}{2} n\rfloor  $ denotes  the largest integer not exceeding $  \frac{3}{2} n.$ 
 The following theorem holds:
\begin{theorem}\label{th:quads}
The  nodes $x_{j}, \; j=1,\ldots,n, $ of the simultaneous Gaussian quadrature rule  are given by the eigenvalues of the banded Hessenberg matrix $ H_n$ (\ref{H}).
Moreover,  let us denote by 
 $ \boldsymbol{u}^{(j)}=[u_1^{(j)}, u_2^{(j)},\ldots, u_n^{(j)} ]^T, $  and $\boldsymbol{v}^{(j)}=[v_1^{(j)}, v_2^{(j)},\ldots, v_n^{(j)} ]^T $
 the  left and right eigenvectors of $ H_n $ associated with $x_{j}, $ respectively. Then,
\begin{equation}\label{eq:weights}
%\begin{array}{l}
{\displaystyle \omega_j^{(1)}}=\frac{\displaystyle  v_1^{(j)} f_{1,1}u_1^{(j)}}{\displaystyle \boldsymbol{u}^{(j)^T} \boldsymbol{v}^{(j)}},%\\
\quad 
{\displaystyle \omega_j^{(2)}}=\frac{\displaystyle v_1^{(j)}\left(  f_{2,1}u_1^{(j)}+f_{2,2}u_2^{(j)}\right)}{\displaystyle  \boldsymbol{u}^{(j)^T} \boldsymbol{v}^{(j)}},
%\end{array}
\quad j=1,\ldots,n,
\end{equation}
where  %$f_{i,k},\; i,k=1,2,$ are 
\begin{equation}\label{eq:D}
\begin{array}{ll}
f_{1,1}={\displaystyle \int_{\Delta^{(1)}}} p_0(x) w^{(1)}(x)dx, & \\ %{\color{red}f_{1,2}=0,} \\
&\\
f_{2,1}= {\displaystyle \int_{\Delta^{(2)}}} p_0(x) w^{(2)}(x)dx, & f_{2,2}= {\displaystyle \int_{\Delta^{(2)}}} p_1(x)w^{(2)}(x)dx.
\end{array}
\end{equation}
\end{theorem}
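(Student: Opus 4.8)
The plan is to derive both the node and weight formulas from the spectral decomposition of $H_n$ together with the orthogonality conditions (\ref{eq:exactp}). First, observe that the recurrence (\ref{eq:MOP}) in matrix form shows that if $x_j$ is a zero of $p_n$, then the vector $\boldsymbol{p}(x_j) = [p_0(x_j), p_1(x_j), \ldots, p_{n-1}(x_j)]^T$ satisfies $H_n \boldsymbol{p}(x_j) = x_j \boldsymbol{p}(x_j)$, so $x_j$ is an eigenvalue of $H_n$ with right eigenvector $\boldsymbol{v}^{(j)}$ proportional to $\boldsymbol{p}(x_j)$. Since $p_n$ has exactly $n$ simple zeros in $\Delta$ (by \cite[Th.~2]{VanAssche2001}, which rules out multiplicities via the quadrature argument), all $n$ eigenvalues of $H_n$ are accounted for, which establishes the first claim about the nodes.

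Next I would construct the left eigenvector. The natural candidate is built from the \emph{type I} functions (or equivalently the dual polynomials / linear forms) associated with the same recurrence: the $(r+2)$-term recurrence for type II MOPs has an adjoint recurrence whose solutions $\boldsymbol{u}^{(j)}$ satisfy $\boldsymbol{u}^{(j)^T} H_n = x_j \boldsymbol{u}^{(j)^T}$. Concretely, one takes $u_k^{(j)}$ proportional to the value at $x_j$ of the $k$-th type I linear form $A^{(1)}_{k-1} w^{(1)} + A^{(2)}_{k-1} w^{(2)}$ integrated appropriately, or one simply defines $\boldsymbol{u}^{(j)}$ as the left eigenvector and identifies its first one or two components via the structure of $H_n$. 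The key algebraic fact to extract is that, because of the biorthogonality $\boldsymbol{u}^{(i)^T}\boldsymbol{v}^{(j)} = 0$ for $i \ne j$, the spectral projector onto the $x_j$-eigenspace is $P_j = \boldsymbol{v}^{(j)}\boldsymbol{u}^{(j)^T} / (\boldsymbol{u}^{(j)^T}\boldsymbol{v}^{(j)})$.

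The heart of the proof is then a Christoffel–Darboux / quadrature-functional computation. For a polynomial $f$ of degree at most $n + n_j - 1$, write the quadrature rule $\int_{\Delta^{(j)}} f(x) w^{(j)}(x)\,dx = \sum_k \omega_k^{(j)} f(x_k)$ as an identity of linear functionals on this polynomial space, and test it against the basis $\{p_0, p_1, \ldots\}$. Using the orthogonality (\ref{eq:exactp}), the moments $\int_{\Delta^{(j)}} p_\ell(x) w^{(j)}(x)\,dx$ vanish for $\ell$ beyond a small threshold — for $j=1$ only $\ell = 0$ survives, for $j=2$ only $\ell = 0, 1$ survive — which is exactly why $f_{1,1}$ and $f_{2,1}, f_{2,2}$ in (\ref{eq:D}) are the only data that enter. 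Expanding $\delta_{x_j}$ against the eigenbasis and using $v_1^{(j)} \propto p_0(x_j)$ together with the normalization buried in the projector, one obtains $\omega_j^{(1)} = v_1^{(j)} f_{1,1} u_1^{(j)} / (\boldsymbol{u}^{(j)^T}\boldsymbol{v}^{(j)})$ and, collecting the two surviving terms for $j=2$, the stated formula for $\omega_j^{(2)}$.

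I expect the main obstacle to be the precise identification of the left eigenvector components $u_1^{(j)}, u_2^{(j)}$ with the correct dual/type-I quantities and pinning down all normalization constants so that the scalar $\boldsymbol{u}^{(j)^T}\boldsymbol{v}^{(j)}$ in the denominator is exactly the right one — this is where sign and scaling errors creep in. A clean way to sidestep ad hoc bookkeeping is to argue abstractly: the functional $f \mapsto \int_{\Delta^{(j)}} f\, w^{(j)}$ restricted to $\mathrm{span}\{p_0,\ldots,p_{n-1}\}$ is represented by a fixed row vector $\boldsymbol{f}^{(j)}$ (namely $[f_{1,1}, 0, \ldots]$ or $[f_{2,1}, f_{2,2}, 0, \ldots]$), and the quadrature weight at $x_j$ is $\boldsymbol{f}^{(j)} P_j \boldsymbol{e}_1$ divided by the appropriate normalization of $\boldsymbol{v}^{(j)}$ fixing $p_0 \equiv 1$; expanding $P_j = \boldsymbol{v}^{(j)}\boldsymbol{u}^{(j)^T}/(\boldsymbol{u}^{(j)^T}\boldsymbol{v}^{(j)})$ then yields (\ref{eq:weights}) directly, with (\ref{eq:D}) dropping out of $\boldsymbol{f}^{(j)}$.
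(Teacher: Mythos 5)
You should first be aware that the paper itself contains no proof of Theorem~\ref{th:quads}: the statement is imported from \cite[Th.~3.2]{VanAssche2005} and \cite[Th.~2]{VanAssche2023}, so your proposal can only be measured against the standard argument in those references --- which is indeed the strategy you outline. The nodes part of your proposal is complete and correct: $H_n\boldsymbol{p}(x_j)=x_j\boldsymbol{p}(x_j)$ whenever $p_n(x_j)=0$, and the $n$ simple zeros exhaust the spectrum. Likewise, your observation that orthogonality (\ref{eq:exactp}) forces $\int p_\ell w^{(1)}=0$ for $\ell\ge 1$ and $\int p_\ell w^{(2)}=0$ for $\ell\ge 2$ is exactly why only the three quantities in (\ref{eq:D}) enter.

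The weights part, however, contains a concrete error in the one place where you commit to a formula. Write $\ell_k$ for the Lagrange basis polynomial at the nodes and expand $\ell_k=\sum_{m=0}^{n-1}\gamma_m^{(k)}p_m$; exactness for degree $\le n-1$ gives $\omega_k^{(2)}=\gamma_0^{(k)}f_{2,1}+\gamma_1^{(k)}f_{2,2}$. The conditions $\ell_k(x_i)=\delta_{ki}$ read $V^T\boldsymbol{\gamma}^{(k)}=\boldsymbol{e}_k$ with $V=[\boldsymbol{p}(x_1)|\cdots|\boldsymbol{p}(x_n)]$, so $\boldsymbol{\gamma}^{(k)}$ is the transposed $k$-th row of $V^{-1}$, i.e.\ the \emph{left} eigenvector normalized by $\boldsymbol{u}^{(k)^T}\boldsymbol{p}(x_k)$; hence $\gamma_m^{(k)}=u_{m+1}^{(k)}/(\boldsymbol{u}^{(k)^T}\boldsymbol{p}(x_k))$, and rescaling via $\boldsymbol{p}(x_k)=\boldsymbol{v}^{(k)}/v_1^{(k)}$ (using $p_0\equiv 1$) lands exactly on (\ref{eq:weights}). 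Your closing ``clean'' recipe $\boldsymbol{f}^{(j)}P_j\boldsymbol{e}_1$ instead pairs the row vector $[f_{2,1},f_{2,2},0,\ldots]$ with the \emph{right} eigenvector, producing $u_1^{(j)}\left(f_{2,1}v_1^{(j)}+f_{2,2}v_2^{(j)}\right)/\left(\boldsymbol{u}^{(j)^T}\boldsymbol{v}^{(j)}\right)$, which is not the stated $\omega_j^{(2)}$; the two expressions coincide only for $\omega_j^{(1)}$, where a single component is involved. The functional $\boldsymbol{f}^{(j)}$ acts on expansion coefficients in the MOP basis, and for $\ell_k$ those sit in the left eigenvector, whereas the entries of $\boldsymbol{v}^{(k)}$ are the values $p_m(x_k)$; the correct abstract expression is $\boldsymbol{e}_1^TP_j\boldsymbol{f}^{(j)^T}$. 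Finally, the detour through type I functions in your second paragraph is unnecessary for this theorem and is left entirely unexecuted; the biorthogonality bookkeeping above already closes the argument.
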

Hence,  simultaneous Gaussian quadrature rules associated with MOPs reduces to the computation of the  eigendecomposition of the Hessenberg matrix $ H_n,$ which,
unfortunately,  turns out to be strongly ill--conditioned \cite{VanAssche2023}.  Furthermore, the   {\tt Matlab} function {\tt balance.m} applied to the Hessenberg matrix $ H_n$  does not improve the condition of the eigenvalue problem  \cite{TNP2023} and, then, the {\tt Matlab} function  {\tt eig.m} yields unreliable results.
Therefore,  procedures for computing simultaneous Gaussian quadrature rules are implemented with variable precision arithmetic  \cite{VanAssche2023}.

%%%%%

Recently, 
simultaneous Gaussian quadrature rules have been proposed for MOPs associated with modified Bessel functions of the first and second kind \cite{VanAssche2023,TNP2023}, where the banded lower Hessenberg matrix $ H_n $ is totally nonnegative.
In particular, in \cite{TNP2023}, a new balancing procedure has been proposed   that drastically reduces the condition of the aforementioned Hessenberg eigenvalue problem, thereby allowing to compute the associated  simultaneous Gaussian quadrature rule  in floating point arithmetic in a reliable way.

Based on the results described in  \cite{TNP2023}, we develop here an algorithm for computing   simultaneous Gaussian quadrature rules  associated with different kinds of MOPs, for which the  banded  lower Hessenberg matrix  was not  totally nonnegative, and  we describe the associated 
  {\tt Matlab} package, which requires  only  $\mathcal{O}(n^2)$ computational  complexity and $\mathcal{O}(n)$ memory.

The paper is organized as follows. Notations  are introduced in Section~\ref{sect:not}. 
The handled classes of MOPs   are listed in Section~\ref{sect:MOP}. Moreover, the use of the {\tt Matlab} function {\tt ClassMOP.m}, generating the coefficients of the recurrence relations of the associated MOPs, is described in Section~\ref{sect:class}.
The use of the function {\tt GaussMOP.m}, computing the nodes and the weights of the chosen class of MOPs, is reported in Section~\ref{sect:alg}, followed by the description of the proposed
   numerical method  in Sections~\ref{subs:1} and \ref{subs:2}.
Numerical tests are reported in  Section~\ref{sect:NE}, followed by the concluding remarks. Finally, the {\tt Matlab} codes can be found in the Appendix.%\ref{sect:append}. 

\section{Notations}\label{sect:not}
Upper--case letters  $A, B, \ldots,   $ denote matrices and $ A_{m,n},$ or simply  $ A_{m}$ if $ m=n,$ denotes matrices of size $ (m,n). $ The entry $(i,j)$ of a matrix $ A $ is denoted by $ a_{i,j}.$ Submatrices are denoted  by the  colon notation of {\tt Matlab}, i.e.,   $ A(i : j,k : l)$ is the submatrix of $A$ obtained by the intersection of rows $ i$  to $ j $  and  columns $k$ to $l$, and  $A(i : j, :)$ and   $ A(: ,k : l)$  are  the rows of $ A$ from $ i$  to $ j $ and   the columns  of $ A $  from  $k$ to $l,$ respectively. 
\\
Given $ A \in \RR^{n \times n} $ and $ k \in \ZZ,$  $ -n+1 \le k \le n-1, $   $\mbox{\tt triu}(A,k) $ denotes the matrix with elements   on and above the $k$th diagonal of $ A.$
\\
Bold lower--case letters $\boldsymbol{x},\boldsymbol{y},  \ldots, \boldsymbol{\omega},  \ldots,$  denote vectors, and $ x_i $ denotes
the $i$th element of the vector $ \boldsymbol{x}$. \\
Lower--case letters $ x, y, \ldots, \lambda,\theta, \ldots, $ denote   scalars.
\\
The identity matrix of order $ n$ is denoted by $ I_n, $ and its $ i$th column,  $i=1,\ldots,n, $ i.e.,
the $i$th vector of the canonical basis of $ \RR^{n}, $  is denoted by $ \boldsymbol{e}_i$.
\\
The zero vector of  length  $ n$ is denoted by $\boldsymbol{o}_{n}. $\\
 The $i$--th subdiagonal of a matrix $ H \in \RR^{m \times n}$ is denoted by ${\tt diag}(H,-i). $\\ The diagonal matrix with entries $ d_1,\ldots, d_n $ is denoted by $\mbox{\tt diag}(d_1,\ldots,d_n). $ \\
The notation $\lfloor y \rfloor $ stands for the largest integer not exceeding $ y \in \RR_{+}.$ \\
The notation $   k \gg 0 $ stands for $ k \in \NN, $ with  $ k$ very large. \\
 If $ x \in \CC, $ $\mathcal{R} (x)$ denotes the real part of $ x. $ \\
Numbers in scientific notation $ a \times 10^{b},$ with $ a, b \in \RR, $  are represented as $ a(b) $ in  Section~\ref{sect:NE}.\\
A {\em flop} denotes a floating point operation (sum, subtraction, multiplication, division).   The square root is considered  a flop as well.

%\section{\ref{en:1a} {\color{blue} Classes of the multiple orthogonal polynomial}
%\section{\ref{en:1a} {\color{blue} Classes of the multiple orthogonal polynomials}}
\section{Classes of the multiple orthogonal polynomials}
\label{sect:MOP} 
The considered classes of MOPs  are listed below.  For each of them, the corresponding weights $  w^{(1)}(x)$ and $ w^{(2)}(x) $ , the integration intervals, the recurrence relations, and the coefficients (\ref{eq:D}), involved in the computation of the vectors of  weights $\boldsymbol{ \omega}^{(1)}$ and $\boldsymbol{ \omega}^{(2)}$,  are reported. %described for each case.

%$begin{enumerate}[label={\bf MOP\arabic* \\ \hspace*{-2cm}}]
%%%%%%%%%%%%%%%%%%%%%%%%%%%%%%%%%%%%%%%%%%%%%%%%%%%%%%%%%%%%%%%%%%%%%%%%%%%%%%%%%%%%%%%%%%%%%%%%
%\subSection
\bigskip

\noindent
$\mbox{\bf MOP}_1:$
{ Multiple Jacobi--Pi\~neiro polynomials} %$\mbox{\bf MOP}_1$
\\%$ $\\
\noindent
Weights: %$ x^{\alpha_j}e^{-x},\; \alpha_j >-1, \; j=1,\ldots, r.$\\
$$\left(  w^{(1)}(x), w^{(2)}(x) \right):= \left(x^{\alpha_1}(1-x)^{\alpha_0}, x^{\alpha_2}(1-x)^{\alpha_0}\right), $$
$ \alpha_j >-1,\; j=,0,1,2,  \;\; \alpha_1-\alpha_2 \notin \mathbb{Z}.$\\

\noindent
Interval: $\Delta^{(i)}=[0,1],\; i=1,2.$ \\

\noindent
Recurrence relation coefficients:
$$ \begin{array}{@{}l@{\hspace{.4mm}}l@{\hspace{.4mm}}l} 
 b_0&=(1+\alpha_1)/(2+\alpha_0+\alpha_1);\\
\mbox{\tt for}\;&i=1,2,\ldots\\
 b_{2i}&=\Bigl(36i^4+(48\alpha_0+28\alpha_1+20\alpha_2+38)i^3+ (21\alpha_0^2+8\alpha_1^2+4\alpha_2^2+30\alpha_0\alpha_1 \Bigr. \\
 & +18\alpha_0\alpha_2+15\alpha_1\alpha_2+39\alpha_0  %\Bigr.  \hphantom{xxxxxxxxxxxxxxxxxx}\\
 +19\alpha_1+19\alpha_2+9)i^2  +(3\alpha_0^3+10\alpha_0^2\alpha_1 \\ & +4\alpha_0^2\alpha_2+6\alpha_0\alpha_1^2+2\alpha_0\alpha_2^2+11\alpha_0\alpha_1\alpha_2+5\alpha_1^2\alpha_2+3\alpha_1\alpha_2^2
		+12\alpha_0^2 +3\alpha_1^2\\ 
		& +3\alpha_2^2+13\alpha_0\alpha_1	+   13\alpha_0\alpha_2+8\alpha_1\alpha_2+6\alpha_0+3\alpha_1+3\alpha_2)i	+  \alpha_0^2 +\alpha_0\alpha_1 \hphantom{nicola}\\% & i=1,2,\ldots\\
		& +\alpha_2\alpha_1^2 +2\alpha_2\alpha_1^2\alpha_0
	+2\alpha_0^2\alpha_1+\alpha_1^2\alpha_0 	+\alpha_2^2\alpha_0+\alpha_2^2\alpha_1+\alpha_0^3\alpha_1 +\alpha_0^2\alpha_1^2 \\
	& +\Bigl.\alpha_2^2\alpha_0\alpha_1+\alpha_2^2\alpha_1^2+2\alpha_2\alpha_0^2\alpha_1 +3\alpha_2\alpha_1\alpha_0+2\alpha_2\alpha_0^2 +\alpha_1\alpha_2+\alpha_0^3+\alpha_0\alpha_2 \Bigr)\\
  & \times
\Bigl((3i+\alpha_0+\alpha_2)(3i+\alpha_0+\alpha_1)(3i+\alpha_0+\alpha_2+1)(3i+\alpha_0+\alpha_1+2)\Bigr)^{-1};\\
%(3i+\alpha_0+\alpha_2)^{-1}(3i+\alpha_0+\alpha_1)^{-1}(3i+\alpha_0+\alpha_2+1)^{-1}(3i+\alpha_0+\alpha_1+2)^{-1};\\
\end{array}
$$
$$ 
\begin{array}{@{}l@{\hspace{.4mm}}l@{\hspace{-8.4mm}}l} 
    %%%%%%%%%%%%%%%%%%%%%%%%%%%%%%%%%%%%%%%%%
		\mbox{\tt for}\;&i=0,1,\ldots\\
    b_{2i+1}&=\Bigl(36i^4+(48\alpha_0+20\alpha_1+28\alpha_2+106)i^3+ 
        (21\alpha_0^2+4\alpha_1^2+8\alpha_2^2+18\alpha_0\alpha_1 \Bigr. \\ 
				&+30\alpha_0\alpha_2+15\alpha_1\alpha_2+105\alpha_0+41\alpha_1+65\alpha_2+111)i^2  +(3\alpha_0^3+4\alpha_0^2\alpha_1\\
				&+10\alpha_0^2\alpha_2+2\alpha_0\alpha_1^2+6\alpha_0\alpha_2^2+11\alpha_0\alpha_1\alpha_2+3\alpha_1^2\alpha_2+5\alpha_1\alpha_2^2+30\alpha_0^2+5\alpha_1^2\\
							&+13\alpha_2^2+23\alpha_0\alpha_1+47\alpha_0\alpha_2+22\alpha_1\alpha_2+72\alpha_0+25\alpha_1+49\alpha_2+48)i \\ %& i=0,1,\ldots\\ 
				& + 18\alpha_0\alpha_2+8\alpha_2\alpha_0^2
        +4\alpha_1+4\alpha_2^2\alpha_1+8\alpha_1\alpha_2+2\alpha_0^3+5\alpha_2^2\alpha_0+8\alpha_2\alpha_1\alpha_0 \\
		& +12\alpha_2+7+15\alpha_0	+\alpha_2^2\alpha_1^2+10\alpha_0^2+6\alpha_0\alpha_1
			+2\alpha_2\alpha_1^2
		+2\alpha_0^2\alpha_1+\alpha_1^2\alpha_0\\ &+\Bigl.5\alpha_2^2 +\alpha_2\alpha_0^3+\alpha_2^2\alpha_0^2+\alpha_1^2+\alpha_2\alpha_1^2\alpha_0+2\alpha_2\alpha_0^2\alpha_1+2\alpha_2^2\alpha_0\alpha_1\Bigr) \\
		&\times \Bigl((3i+\alpha_0+\alpha_2+1)(3i+\alpha_0+\alpha_1+2)(3i+\alpha_0+\alpha_2+3)(3i+\alpha_0+\alpha_1+3)\Bigr)^{-1};\\
%		(3i+\alpha_0+\alpha_2+1)^{-1}(3i+\alpha_0+\alpha_1+2)^{-1}(3i+\alpha_0+\alpha_2+3)^{-1}(3i+\alpha_0+\alpha_1+3)^{-1};\\
		\end{array}
$$
%$$ \begin{array}{@{}l@{\hspace{.4mm}}ll}
$$ \begin{array}{@{}l@{\hspace{.4mm}}l@{\hspace{-1.4mm}}l} 
 %%%%%%%%%%%%%%%%%%%%%%%%%%%%%%%%%%%%%%%%%%%%%%%%%%
c_1&=(1+\alpha_0)(1+\alpha_1)(3+\alpha_0+\alpha_1)^{-1}(2+\alpha_0+\alpha_1)^{-2};\\
\mbox{\tt for}\;&i=1,2,\ldots\\
%c_1&= (1+\alpha_0)(1+\alpha_1)(3+\alpha_0+\alpha_1)^{-1}(2+\alpha_0+\alpha_1)^{-2}\\
	 c_{2i}&=i(2i+\alpha_0)(2i+\alpha_0+\alpha_1)(2i+\alpha_0+\alpha_2)\Bigl(54i^4+(63\alpha_0+45\alpha_1+45\alpha_2)i^3  \Bigr.\\
   & +(24\alpha_0^2+8\alpha_1^2+8\alpha_2^2+42\alpha_0\alpha_1 +42\alpha_0\alpha_2+44\alpha_1\alpha_2-8)i^2 +(3\alpha_0^3+\alpha_1^3\\ 
		&+\alpha_2^3+12\alpha_0^2\alpha_1+12\alpha_0^2\alpha_2+3\alpha_0\alpha_1^2+3\alpha_0\alpha_2^2+33\alpha_0\alpha_1\alpha_2  +8\alpha_1^2\alpha_2+8\alpha_1\alpha_2^2 \\ %& i=1,2,\ldots\\
		&-3\alpha_0-4\alpha_1-4\alpha_2)i+\alpha_0^3\alpha_1+\alpha_0^3\alpha_2+6\alpha_0^2\alpha_1\alpha_2+\alpha_1^3\alpha_2+\alpha_1\alpha_2^3+3\alpha_0\alpha_1^2\alpha_2 \\ &+3\alpha_0\alpha_1\alpha_2^2-\alpha_0\alpha_1-\Bigl.\alpha_0\alpha_2-2\alpha_1\alpha_2\Bigr)\Bigl((3i+\alpha_0+\alpha_1+1)(3i+\alpha_0+\alpha_2+1)\Bigr)^{-1}\\
		&\Bigl((3i+\alpha_0+\alpha_1)^{2}(3i+\alpha_0+\alpha_2)^{2}(3i+\alpha_0+\alpha_1-1)(3i+\alpha_0+\alpha_2-1)\Bigr)^{-1};\\
  %%%%%%%%%%%%%%%%%%%%%%%%%%%%%%%%%%%%%%%%%%
	%\end{array}
%$$
%$$ 
%\begin{array}{@{}l@{\hspace{.4mm}}ll}
%c_1&=(1+\alpha_0)(1+\alpha_1)(3+\alpha_0+\alpha_1)^{-1}(2+\alpha_0+\alpha_1)^{-2};\\
%\mbox{\tt for}&i=1,2,\ldots\\
  c_{2i+1}&=(2i+\alpha_0+1)(2i+\alpha_0+\alpha_1+1)(2i+\alpha_0+\alpha_2+1)\Bigl(54i^5+(63\alpha_0+45\alpha_1 \Bigr.\\ &+45\alpha_2+135)i^4 
     +(24\alpha_0^2+8\alpha_1^2  +8\alpha_2^2+42\alpha_0\alpha_1+42\alpha_0\alpha_2+44\alpha_1\alpha_2+126\alpha_0 \\
		& +76\alpha_1+104\alpha_2+120)i^3+(3\alpha_0^3+\alpha_1^3+\alpha_2^3+12\alpha_0^2\alpha_1+12\alpha_0^2\alpha_2+3\alpha_0\alpha_1^2 \\ &+3\alpha_0\alpha_2^2+33\alpha_0\alpha_1\alpha_2
     +8\alpha_1^2\alpha_2+8\alpha_1\alpha_2^2+36\alpha_0^2+5\alpha_1^2+19\alpha_2^2+54\alpha_0\alpha_1 \\ % &i=1,2,\ldots\\
		&+72\alpha_0\alpha_2+66\alpha_1\alpha_2+87\alpha_0 +39\alpha_1+81\alpha_2+45)i^2
  +(\alpha_0^3\alpha_1+\alpha_0^3\alpha_2 \\
		&+6\alpha_0^2\alpha_1\alpha_2+\alpha_1^3\alpha_2+\alpha_1\alpha_2^3+3\alpha_0\alpha_1^2\alpha_2+3\alpha_0\alpha_1\alpha_2^2+3\alpha_0^3+2\alpha_2^3 + 12\alpha_0^2\alpha_1\\ 
	&+12\alpha_0^2\alpha_2+6\alpha_0\alpha_2^2+33\alpha_0\alpha_1\alpha_2+5\alpha_1^2\alpha_2+11\alpha_1\alpha_2^2+18\alpha_0^2+20\alpha_0\alpha_1\\ 
	&+38\alpha_0\alpha_2+14\alpha_2^2+26\alpha_1\alpha_2 
	 +24\alpha_0+6\alpha_1+24\alpha_2+6)i + 
  \alpha_0^3\alpha_1+3\alpha_0^2\alpha_1\alpha_2\\
	&+3\alpha_0\alpha_1\alpha_2^2+\alpha_1\alpha_2^3+\alpha_0^3+\alpha_2^3+3\alpha_0^2\alpha_1+3\alpha_0^2\alpha_2+6\alpha_0\alpha_1\alpha_2+3\alpha_0\alpha_2^2\\ 
	& +3\alpha_1\alpha_2^2+3\alpha_0^2+3  \alpha_2^2+2\alpha_0\alpha_1+6\alpha_0\alpha_2+2\alpha_1\alpha_2+2\alpha_0+2\alpha_2\Bigr) \\ &\times(3i+\alpha_0+\alpha_1+3)^{-1}(3i+\alpha_0+\alpha_2+2)^{-1}(3i+\alpha_0+\alpha_1+2)^{-2}\\
	&
  \times(3i+\alpha_0+\alpha_2+1)^{-2}(3i+\alpha_0+\alpha_1+1)^{-1}(3i+\alpha_0+\alpha_2)^{-1};\\
		\end{array}
$$
$$ \begin{array}{@{}l@{\hspace{.4mm}}ll} 
%$$ \begin{array}{lll} 
%%%%%%%%%%%%%%%%%%%%%%%%%%%%%%%%%%%%%%%%%   
\mbox{\tt for}\;&i=1,2,\ldots\\
d_{2i}&=i(2i+\alpha_0)(2i+\alpha_0-1)(2i+\alpha_0+\alpha_1)(2i+\alpha_0+\alpha_1-1)(2i+\alpha_0+\alpha_2)\\
& \times(2i+\alpha_0+\alpha_2-1)(i+\alpha_1) (i+\alpha_1-\alpha_2) (3i+\alpha_0+\alpha_1+1)^{-1}\\
    &\times(3i+\alpha_0+\alpha_1)^{-2}(3i+\alpha_0+\alpha_2)^{-1}(3i-1+\alpha_0+\alpha_1)^{-2}\\
	 &\times(3i+\alpha_0+\alpha_2-1)^{-1}(3i+\alpha_0+\alpha_1-2)^{-1}(3i+\alpha_0+\alpha_2-2)^{-1},  \\
	%%%%%%%%%%%%%%%%%%%%%%%%%%%%%%%%%%%%%%%%%%%%%%%%%%%%%%%  
     d_{2i+1}&=i(2i+\alpha_0+1)(2i+\alpha_0)(2i+\alpha_0+\alpha_1)(2i+\alpha_0+\alpha_1+1)\\
    &\times(2i+\alpha_0+\alpha_2+1)(2i+\alpha_0+\alpha_2)(i+\alpha_2(i+\alpha_2-\alpha_1)\\
		 &\times(3i+\alpha_0+\alpha_1+2)^{-1}(3i+\alpha_0+\alpha_2+2)^{-1}\times(3i+\alpha_0+\alpha_1+1)^{-1}\\
		 &\times(3i+1+\alpha_0+\alpha_2)^{-2}(3i+\alpha_0+\alpha_1)^{-1}(3i+\alpha_0+\alpha_2)^{-2}(3i+\alpha_0+\alpha_2-1)^{-1}.
\end{array}
$$
\noindent
Coefficients (\ref{eq:D}), involved in the computation of the weights:
$$
\begin{array}{ll}
f_{1,1}= \frac{ \Gamma(1 + \alpha_0) \Gamma(1 + \alpha_1)}{\Gamma(2 + \alpha_0+\alpha_1)}, & \\
f_{2,1}= \frac{\Gamma(1 + \alpha_0) \Gamma(1 + \alpha_2)}{\Gamma(2 + \alpha_0+\alpha_2)}, &
f_{2,2}= \left((1+\alpha_2) -(2+\alpha_0+\alpha_2)b_0\right)\frac{\Gamma(1 + \alpha_0) \Gamma(1 + \alpha_2)}{\Gamma(3 + \alpha_0+\alpha_2)}, \\
\end{array}
$$
where
$\Gamma $ is the Gamma function  \cite[p.~255]{abr64} defined as
$$
\Gamma(z)= \int_{0}^{\infty} t^{z-1} e^{-t} dt, \qquad  \mathcal{R} (z) >0,
$$
and computed by the  {\tt Matlab} function {\tt gamma.m}. 

%\noindent
%Input parameters of {\tt GaussMOP.m}:
%$$
%\begin{array}{l}
  %{\tt IC}= 1\\
 %{ n}: \mbox{ number of nodes of the simultaneous Gaussian quadrature rule}\\
  %\boldsymbol{\alpha} = [\alpha_0,\;\alpha_1,\; \alpha_2]^T, $  \;\; $\alpha_j >-1,\; j=0,1,2,  \;\; \alpha_1-\alpha_2 \notin \mathbb{Z}  \\
%\end{array}
%$$
\noindent
References: \cite{Pine,VanAssche2001}.

%%%%%%%%%%%%%%%%%%%%%%%%%%%%%%%%%%%%%%%%%%%%%%%%%%%%%%%%%%%%%%%%%%%%%%%%%%%%%%%%%%%%%%%%%%%%%%%%
\bigskip

\noindent
$\mbox{\bf MOP}_2:$  %\subSection
{Multiple Laguerre polynomials of first kind} % \label{example3}
\\ %$ $\\
\noindent
Weights: %$ x^{\alpha_j}e^{-x},\; \alpha_j >-1, \; j=1,\ldots, r.$\\
$$\left(  w^{(1)}(x), w^{(2)}(x) \right):= \left(x^{\alpha_1}e^{-x}, x^{\alpha_2}e^{-x}\right), $$
$ \alpha_j >-1,\; j=1,2. $\\

\noindent
Interval: $\Delta^{(i)}=[0, \infty),\; i=1,2.$ \\

\noindent
Recurrence relation coefficients:
$$ \begin{array}{l@{\hspace{.4mm}}l} \\
\mbox{\tt for}\;&i=0,1,2,\ldots\\
&b_{2i}=3i+\alpha_1+1, \\
& b_{2i+1}=3i+\alpha_2+2,\\
&c_{2i}=i(3i+\alpha_1+\alpha_2),\\
& c_{2i+1}=3i^2+(\alpha_1+\alpha_2+3)i+\alpha_1+1,\\
&d_{2i}=i(i+\alpha_1)(i+\alpha_1-\alpha_2),\\
& d_{2i+1}=i(i+\alpha_2)(i+\alpha_2-\alpha_1).\\
\end{array}
$$
%%%%%%%%%%%
\noindent
Coefficients (\ref{eq:D}), involved in the computation of the weights:
$$
\begin{array}{ll}
f_{1,1}=\Gamma(1 + \alpha_1),& \\
f_{2,1}= \Gamma(1 + \alpha_2),  &
f_{2,2}= \Gamma(1 + \alpha_2) ( \alpha_2-\alpha_1).
% f_{2,2}= (1 + \alpha_2)\Gamma(1 + \alpha_2)- b_0 \Gamma(1 + \alpha_2) % ( b0= 1+\alpha_1 )
\end{array}
$$

%\noindent
%Input parameters of {\tt GaussMOP.m}:
%$$
%\begin{array}{l}
  %{\tt IC}= 2\\
 %{ n}: \mbox{ number of nodes of the simultaneous Gaussian quadrature rule}\\
  %\boldsymbol{\alpha} = [\alpha_1,\; \alpha_2]^T, $ \;\; $\alpha_j >-1,\; j=1,2
%\end{array}
%$$
\noindent
References: \cite{Soro1,Soro3,VanAssche2001}.
%%%%%%%%%%%%%%%%%%%%%%%%%%%%%%%%%%%%%%%%%%%%%%%%%%%%%%%%%%%%%%%%%%%%%%%%%%%%%%%%%%%%%%%%%%%%%%%%
\bigskip

\noindent
$\mbox{\bf MOP}_3:$  %\subSection
{Multiple Laguerre polynomials of second kind}% \label{example3.1}
\\ %$ $\\
\noindent
Weights: %$ x^{\alpha_j}e^{-x},\; \alpha_j >-1, \; j=1,\ldots, r.$\\
$$\left(  w^{(1)}(x), w^{(2)}(x) \right):= \left(x^{\alpha_0}e^{-\alpha_1 x}, x^{\alpha_0}e^{-\alpha_2 x}\right), $$ $ \alpha_0 >-1,\;\alpha_j >0, \; j=1,2,\; \alpha_1 \ne \alpha_2.$ \\

\noindent
Interval: $\Delta^{(i)}=[0, \infty),\; i=1,2.$ \\

\noindent
Recurrence relation coefficients:
%$$ \begin{array}{l} \\
$$ \begin{array}{l@{\hspace{.4mm}}l} \\
\mbox{\tt for}\;&i=0,1,2,\ldots\\
&b_{2i}=\frac{\displaystyle i(\alpha_1+3\alpha_2) +(1+\alpha_0)\alpha_2}{\displaystyle \alpha_1\alpha_2},\\
& b_{2i+1}=\frac{\displaystyle i(3\alpha_1+\alpha_2) +(2+\alpha_0)\alpha_1+\alpha_2}{\displaystyle \alpha_1\alpha_2},\\
&c_{2i}=\frac{\displaystyle i(2i+\alpha_0)(\alpha_1^2+\alpha_2^2)}{\displaystyle \alpha_1^2\alpha_2^2},\\
& c_{2i+1}=\frac{\displaystyle 2i^2(\alpha_1^2+\alpha_2^2)+i\left(\alpha_1^2+3\alpha_2^2+\alpha_0(\alpha_1^2+\alpha_2^2)\right)+(1+\alpha_0)\alpha_2^2}{\displaystyle \alpha_1^2\alpha_2^2},\\
&d_{2i}=\frac{\displaystyle i(2i+\alpha_0)(2i+\alpha_0-1)(\alpha_2-\alpha_1)}{\displaystyle \alpha_1^3\alpha_2},\\
& d_{2i+1}=\frac{\displaystyle i(2i+\alpha_0)(2i+\alpha_0+1)(\alpha_1-\alpha_2)}{\displaystyle \alpha_1\alpha_2^3}.\\
\end{array}
$$
%%%%%%%%%%%
\noindent
Coefficients (\ref{eq:D}), involved in the computation of the weights:
$$
\begin{array}{ll}
f_{1,1}=\alpha_1^{-1-\alpha_0}\Gamma(1 + \alpha_0),& \\
f_{2,1}= \alpha_2^{-1-\alpha_0}\Gamma(1 + \alpha_0),  &
f_{2,2}= \alpha_1^{-1}\alpha_2^{-2-\alpha0}  (\alpha_1-\alpha_2)\Gamma(2 + \alpha_0).
% f_{2,2}= (1 + \alpha_2)\Gamma(1 + \alpha_2)- b_0 \Gamma(1 + \alpha_2) % ( b0= 1+\alpha_1 )
\end{array}
$$
%\noindent
%Input parameters of {\tt GaussMOP.m}:
%$$
%\begin{array}{l}
  %{\tt IC}= 3\\
 %{ n}: \mbox{ number of nodes of the simultaneous Gaussian quadrature rule}\\
  %\boldsymbol{\alpha} = [\alpha_0,\;\alpha_1,\; \alpha_2]^T,  \;\; \alpha_0 >-1,\;\alpha_j >0, \; j=1,2,\; \alpha_1 \ne \alpha_2. \\
%\end{array}
%$$
\noindent
References:\cite{Niki1,VanAssche2001}.
%\end{example}
%\begin{example}
%%%%%%%%%%%%%%%%%%%%%%%%%%%%%%%%%%%%%%%%%%%%%%%%%%%%%%%%%%%%%%%%%%%%%%%%%%%%%%%%%%%%%%%%%%%%%%%%
\bigskip

\noindent
$\mbox{\bf MOP}_4:$  %\subSection
{Multiple Hermite polynomials}% \label{example4.1}
\\%$ $\\
\noindent
Weights:% $ x^{c_j x}e^{-x^2},\; c_j\in \RR, \; j=1,\ldots, r.$

$$\left(  w^{(1)}(x), w^{(2)}(x) \right):= \left(e^{-x^2+\alpha_1 x}, e^{-x^2+\alpha_2 x}\right), $$ 
$\alpha_1, \alpha_2\in \RR,  \;\; \alpha_1 \ne \alpha_2.$\\

\noindent
Intervals: $\Delta^{(i)}=(-\infty, \infty), \; i=1,2.$ \\
%Interval: $(-\infty, \infty).$ \\

\noindent
Recurrence relation coefficients:
%$$ \begin{array}{l} \\
$$ \begin{array}{l@{\hspace{.4mm}}l} \\
\mbox{\tt for}\;&i=0,1,2,\ldots\\
&b_{2i}=\alpha_1/2,\\
& b_{2i+1}=\alpha_2/2,\\
&c_{i}=i/2, \\ 
&d_{2i}=i(\alpha_1-\alpha_2)/4,\\ 
&d_{2i+1}=i(\alpha_2-\alpha_1)/4.\\
\end{array}
$$
\noindent
Coefficients (\ref{eq:D}), involved in the computation of the weights:
%%%%%%%%%%%
$$
\begin{array}{ll}
f_{1,1}=e^{\frac{\alpha_1^2}{4}}  \sqrt{\pi},& \\
f_{2,1}=  e^{\frac{\alpha_2^2}{4}}  \sqrt{\pi},  &
f_{2,2}= \frac{\alpha_2-\alpha_1}{2} e^{\frac{\alpha_2^2}{4}}  \sqrt{\pi}.
% f_{2,2}= (1 + \alpha_2)\Gamma(1 + \alpha_2)- b_0 \Gamma(1 + \alpha_2) % ( b0= 1+\alpha_1 )
\end{array}
$$
%\noindent
%Input parameters of {\tt GaussMOP.m}:
%$$
%\begin{array}{l}
  %{\tt IC}= 4\\
 %{ n}: \mbox{ number of nodes of the simultaneous Gaussian quadrature rule}\\
  %\boldsymbol{\alpha} = [\alpha_1,\; \alpha_2]^T, \;\;\alpha_1, \alpha_2\in \RR,  \;\; \alpha_1 \ne \alpha_2 \\
%\end{array}
%$$
\noindent
References: \cite{VanAssche2001}.
\bigskip

\noindent
$\mbox{\bf MOP}_5:$    % Verified 
{Multiple Laguerre--Hermite polynomials}%\label{example6.1}
\\%$ $\\
\noindent
Weights:% $ x^{c_j x}e^{-x^2},\; c_j\in \RR, \; j=1,\ldots, r.$

$$\left(  w^{(1)}(x), w^{(2)}(x) \right):= \left(e^{-x^2} |x|^{\beta}, e^{-x^2} x^{\beta}\right),$$
$ \beta >-1.$ \\

\noindent
Intervals: $\Delta^{(1)}=(-\infty, 0],\;\Delta^{(2)}=[0,\infty).$ \\

\noindent
Recurrence relation coefficients:
$$ \begin{array}{l@{\hspace{.4mm}}l} \\
\mbox{\tt for}\;&i=0,1,2,\ldots\\
%$$ \begin{array}{l} \\
&b_{2i}=X_{i}^{(\beta)},\\
& b_{2i+1}=-X_{i}^{(\beta)},\\
&c_{2i}=\frac{i}{2}, \\
& c_{2i+1}=\frac{2i+\beta+1}{2}-(X_i^{(\beta)})^2\\
&d_{2i}=\frac{i}{2}X_{i-1}^{(\beta)},\\
& d_{2i+1}=-\frac{i}{2}X_{i}^{(\beta)}
\end{array}
$$
with $
X_{i}^{(\beta)}=-\frac{\Gamma\left(\frac{i+\beta+2}{2}\right)}{\Gamma\left(\frac{i+\beta+1}{2}\right)}.
$
Moreover,  for large $ i, $ $X_{i}^{(\beta)}= -\sqrt{\frac{\beta+i}{2}}+ o (\sqrt{i}) $ \cite{VanAssche2001}.
%%%%%%%%%%%

\noindent
Coefficients (\ref{eq:D}), involved in the computation of the weights:
$$
\begin{array}{ll}
f_{1,1}=%\int_{-\infty}^{0}e^{-x^2} |x|^{\beta}dx=
  \frac{1}{2} \Gamma\left(\frac{1+\beta}{2}\right),& \\
f_{2,1}=%\int_{0}^{\infty}e^{-x^2} x^{\beta}dx= 
 \frac{1}{2} \Gamma\left(\frac{1+\beta}{2}\right),  &
f_{2,2}=%\int_{0}^{\infty}e^{-x^2} x^{\beta}(x-b_0)p_0(x)dx=
\frac{1}{2}\left(-b_0 \Gamma\left(\frac{1+\beta}{2}\right)+\Gamma\left(\frac{2+\beta}{2}\right)\right).
%\frac{1-b_0}{2} \Gamma\left(\frac{1+\beta}{2}\right).
% f_{2,2}= (1 + \alpha_2)\Gamma(1 + \alpha_2)- b_0 \Gamma(1 + \alpha_2) % ( b0= 1+\alpha_1 )
\end{array}
$$
%\noindent
%Input parameters of {\tt GaussMOP.m}:
%$$
%\begin{array}{l}
  %{\tt IC}= 5\\
 %{ n}: \mbox{ number of nodes of the simultaneous Gaussian quadrature rule}\\
  %\boldsymbol{\alpha} = [\beta]^T,   \;\; \beta >-1  \\
%\end{array}
%$$
\noindent
References: \cite{Soro3,VanAssche2001}.
%%%%%%%%%%%%%%%%%%%%%%%%%%%%%%%%%%%%%%%%%%%%%%%%%%%%%%%%%%%%%%%%%%%%%%%%%%%%%%%%%%%%%%%%%%%%%%%%
\bigskip

\noindent
$\mbox{\bf MOP}_6:$  % Verified
{MOPs Associated with
the Modified Bessel function of the  second kind (Macdonald function) $K_\nu(x)$}\label{example1}
%\begin{example}\label{example1}
\\ %$ $\\
\noindent
Weights:
$$\left(  w^{(1)}(x), w^{(2)}(x) \right):= \left(2x^{\alpha +\nu/2}K_\nu(2\sqrt{x}),2x^{\alpha +(\nu+1)/2}K_{\nu+1}(2\sqrt{x}) \right),$$ 
$ \alpha >-1,\;\; \nu\ge 0,$ 
with $K_\nu(x)$ % and $K_{\nu+1}(x)$
the modified Bessel function  of the second kind given by
$$   K_\nu(x):= \frac12\left(\frac{x}{2}\right)^\nu\int_{0}^\infty exp\left(-t-\frac{x^2}{4t}\right)t^{-\nu-1}dt.
$$

\noindent
Interval: $\Delta^{(i)}=[0, \infty), i=1,2.$ \\

\noindent
Recurrence relation coefficients:
%and for the weight functions 
%it is shown in \cite{VanAssche} that, for $\alpha >-1$ and $\nu\ge 0$, the recurrence relations for the corresponding monic MOPs are given by $a_i=1$ for $i\ge 0$, and 
%\begin{eqnarray} \nonumber
%b_i & \! = \! & i(3i+\alpha+2\nu) + (\alpha+1)(3i+\alpha +\nu+1), \quad  i\ge 0  \\ \nonumber
%c_i & \! = \! & i(i+\alpha)(i+\alpha+\nu)(3i+2\alpha+\nu), \quad i\ge 1 \\ \nonumber
%d_i & \! = \!  & i(i-1)(i+\alpha)(i+\alpha-1)(i+\alpha+\nu)(i+\alpha+\nu-1), \quad i\ge 2 .
%\end{eqnarray}
$$ \begin{array}{l@{\hspace{.4mm}}l} \\
\mbox{\tt for}\;&i=0,1,2,\ldots\\
& b_i  =  i(3i+\alpha+2\nu) + (\alpha+1)(3i+\alpha +\nu+1),  \\ 
& c_i  =  i(i+\alpha)(i+\alpha+\nu)(3i+2\alpha+\nu), \\
& d_i  =  i(i-1)(i+\alpha)(i+\alpha-1)(i+\alpha+\nu)(i+\alpha+\nu-1).
\end{array}
$$

\noindent
Coefficients (\ref{eq:D}), involved in the computation of the weights:
$$
\begin{array}{ll}
f_{1,1}= \Gamma(\alpha+1)\Gamma(\alpha+\nu+1),& \\
f_{2,1}=  \Gamma(\alpha+1)\Gamma(\alpha+\nu+2),  &
f_{2,2}=\Gamma(\alpha+2)\Gamma(\alpha+\nu+2).\\ %(\alpha+1)^2(\nu+1)\Gamma(\alpha+1)\Gamma(\alpha+\nu+2).\\
%(\alpha+1)(\alpha+\nu +\alpha \nu+1)\Gamma(\alpha+1)\Gamma(\alpha+\nu+2).
\end{array}
$$
%
%
%\noindent
%Input parameters of {\tt GaussMOP.m}:
%$$
%\begin{array}{l}
  %{\tt IC}= 6\\
 %{ n}: \mbox{ number of nodes of the simultaneous Gaussian quadrature rule}\\
  %\boldsymbol{\alpha} = [\alpha,\;  \nu]^T,   \;\; \alpha >-1,\;\; \nu\ge 0 \\
%\end{array}
%$$
\noindent
References: \cite{ChDo2000,VanAssche2000,VanAssche2023}.
%%%%%%%%%%%%%%%%%%%%%%%%%%%%%%%%%%%%%%%%%%%%%%%%%%%%%%%%%%%%%%%%%%%%%%%%%%%%%%%%%%%%%%%%%%%%%%%%
\bigskip

\noindent
$\mbox{\bf MOP}_7:$  % Verified
{MOPs associated with
the Modified Bessel functions of the first kind $I_\nu(x)$}\label{example8.1}
\\ %$ $\\
Weights:
$$\left( w^{(1)}(x), w^{(2)}(x) \right):= \left(x^{\nu/2 }I_\nu(2\sqrt{x})e^{-\beta x},x^{(\nu+1)/2}I_{\nu+1}(2\sqrt{x})e^{-\beta x} \right),$$ 
$\beta>0,\;\nu > -1$
where 
$I_\nu(x)$ is  the modified Bessel function  of the first kind \cite{VanAssche2003},
$$I_\nu(x)=\left(\frac{z}{2}\right)^\nu \sum_{k=0}^{\infty} \frac{\left(\frac{z}{2}\right)^{2k}}{k!\Gamma(\nu+k+1)},\quad \nu\in \RR.$$
\noindent
Interval: $\Delta^{(i)}=[0, \infty), i=1,2.$ \\

\noindent
Recurrence relation coefficients: 
%it is shown in \cite{VanAssche} that, for, the recurrence relations for the corresponding monic MOPs are given by $a_i=1$ for $i\ge 0$, and
%\begin{eqnarray} \nonumber
%b_i & \! = \! & \frac{1}{\beta^2}(1+\beta(\nu +2i+1) ), \quad  i\ge 0  \\ \nonumber
%c_i & \! = \! &  \frac{i}{\beta^3}(2+\beta(\nu+i)), \quad i\ge 1 \\ \nonumber
%d_i & \! = \!  & \frac{i(i -1)}{\beta^4}, \quad i\ge 2.
%\end{eqnarray}
$$ \begin{array}{l@{\hspace{.4mm}}l} \\
\mbox{\tt for}\;&i=0,1,2,\ldots\\
& b_i =  (1+\beta(\nu +2i+1) )/\beta^{2}, \\
& c_i =  i(2+\beta(\nu+i))/\beta^{3}, \\
& d_i = {i(i -1)}/{\beta^4}.
\end{array}
$$
\noindent
Coefficients (\ref{eq:D}), involved in the computation of the weights:
$$
\begin{array}{ll}
f_{1,1}= \beta^{-1-\nu} e^{\frac{1}{\beta}},& \\
f_{2,1}=  \beta^{-2-\nu} e^{\frac{1}{\beta}},&
f_{2,2}=\beta^{-3-\nu} e^{\frac{1}{\beta}}.\\
%(\alpha+1)(\alpha+\nu +\alpha \nu+1)\Gamma(\alpha+1)\Gamma(\alpha+\nu+2).
\end{array}
$$
%\noindent
%Input parameters of {\tt GaussMOP.m}:
%$$
%\begin{array}{l}
  %{\tt IC}= 7\\
 %{ n}: \mbox{ number of nodes of the simultaneous Gaussian quadrature rule}\\
  %\boldsymbol{\alpha} = [\beta,\;  \nu]^T,   \;\; \beta >0,\;\; \nu\ge -1 \\
%\end{array}
%$$
\noindent
References: \cite{VanAssche2003,VanAssche2023}.
%%%%%%%%%%%%%%%%%%%%%%%%%%%%%%%%%
%%%%%%%%%%%%%%%%%%%%%%%%%%%%%%%%%%%%%%%%%%%%%%%%%%%%%%%%%%%%%%%%%%%%%%%%%%%%%%%%%%%%%%%%%%%%%%%%
\bigskip

\noindent
$\mbox{\bf MOP}_8:$ % Verified
{MOPs associated with the
Gauss' hypergeometric function}\label{example9.1}
\\ %$ $\\
Weights:
%$$\left( w^{(1)}(x), w^{(2)}(x) \right):= 
%\left(\frac{\Gamma(c)\Gamma(d)}{\Gamma(a) \Gamma(b)\Gamma(\delta)}x^{a-1} (1-x)^{\delta-1}\;  _2{F}_1 \left( \left[\begin{array}{@{}c@{}}c-b\\ d-b \end{array}\right],\delta,1-x \right),\frac{\Gamma(c)\Gamma(d)}{\Gamma(a) \Gamma(b)\Gamma(\delta)}x^{a-1} (1-x)^{\delta-1}\;  _2{F}_1 \left( \left[\begin{array}{@{}c@{}}c-b\\ d-b \end{array}\right],\delta,1-x \right) \right),
$$ 
\begin{array}{@{}l}
 w^{(1)}(x)  :=  
\frac{\Gamma(c)\Gamma(d)}{\Gamma(a) \Gamma(b)\Gamma(\delta)}x^{a-1} (1-x)^{\delta-1}\;  _2{F}_1 \left( \left[\begin{array}{@{}c@{}}c-b\\ d-b \end{array}\right],\delta,1-x\right), \\
 w^{(2)}(x) := \frac{\Gamma(c+1)\Gamma(d)}{\Gamma(a) \Gamma(b+1)\Gamma(\delta)}x^{a-1} (1-x)^{\delta-1}\;  _2{F}_1 \left( \left[\begin{array}{@{}c@{}}c-b\\ d-b-1 \end{array}\right],\delta,1-x \right),
\end{array} 
$$
with
$
 a,b,c,d \in \RR^{+},  \; c+1, d > a, \; c,d > b, \; \delta= c+d-b-a>0,$
 and
$$
 _2{F}_1 \left( \left[\begin{array}{@{}c@{}}\alpha\\ \beta \end{array}\right],\gamma,z\right)=
\sum_{i=0}^{\infty} \frac{(\alpha)_i (\beta)_i}{(\gamma)_i} \frac{z^n}{n!}
$$
is the Gauss' hypergeometric function,  computed by the {\tt Matlab} function {\tt hypergeom.m}, and $ (\mu)_i $ denotes the Pochhammer symbol defined by
\begin{eqnarray*}
(\mu)_0 &  :=& 0, \\
(\mu)_i &  :=&\mu(\mu+1) \cdots(\mu+i-1),\;\; i \in \ZZ^{+}. 
\end{eqnarray*}
%where 
%$I_\nu(x)$ is  the modified Bessel function  of the first kind \cite{VanAssche2003},
%$$I_\nu(x)=\left(\frac{z}{2}\right)^\nu \sum_{k=0}^{\infty} \frac{\left(\frac{z}{2}\right)^{2k}}{k!\Gamma(\nu+k+1)},\quad \nu\in \RR.$$
\noindent
Interval: $\Delta^{(i)}=[0, 1], i=1,2.$ \\

\noindent
Recurrence relation coefficients: \\
Let 
\begin{eqnarray*}
\lambda_{3i} & = &  \frac{i (a+i-1)(c'_i -b -1)}{(c'_i +i-2)(c'_i +i-1)(c'_{i+1}+i-2)},\\
\lambda_{3i+1} & = &  \frac{i (b+i)(c'_{i+1} -a -1)}{(c'_i +i-1)(c'_{i+1} +i-2)(c'_{i+1}+i-1)}, \\
\lambda_{3i+2} & = &  \frac{(a+i)(b+i)(c'_i- 1)}{(c'_{i}+i-1)(c'_{i} +i)(c'_{i+1}+i-1)},
\end{eqnarray*}
with
$$
c'_i=\left\{ \begin{array}{ll} c+k, & \;\; \mbox{\rm if} \;\;  i =2 k-1, 
\\
d+k, & \;\; \mbox{\rm if} \;\;  i =2 k.
\end{array} \right.
$$
Then
%it is shown in \cite{VanAssche} that, for, the recurrence relations for the corresponding monic MOPs are given by $a_i=1$ for $i\ge 0$, and 
%\begin{eqnarray*} 
%b_i & \! = \! & \lambda_{3i}\lambda_{3i+1}\lambda_{3i+2},\\
%c_i & \! = \! & \lambda_{3i+1}\lambda_{3i+3}+\lambda_{3i+2}\lambda_{3i+3}+\lambda_{3i+2}\lambda_{3i+4},\\
%d_i & \! = \!  & \lambda_{3i+2}\lambda_{3i+4}\lambda_{3i+6}.\\
%\end{eqnarray*}
$$ \begin{array}{l@{\hspace{.4mm}}l} \\
\mbox{\tt for}\;&i=0,1,2,\ldots\\
&b_i = \lambda_{3i}\lambda_{3i+1}\lambda_{3i+2},\\
&c_i = \lambda_{3i+1}\lambda_{3i+3}+\lambda_{3i+2}\lambda_{3i+3}+\lambda_{3i+2}\lambda_{3i+4},\\
&d_i = \lambda_{3i+2}\lambda_{3i+4}\lambda_{3i+6}.\\
\end{array}
$$
\noindent
Coefficients (\ref{eq:D}), involved in the computation of the weights:
$$
\begin{array}{ll}
f_{1,1}= 1,& \\
f_{2,1}=  1,&
f_{2,2}=\frac{a(c-b)}{cd (c+1)}.\\
%(\alpha+1)(\alpha+\nu +\alpha \nu+1)\Gamma(\alpha+1)\Gamma(\alpha+\nu+2).
\end{array}
$$
%
%\noindent
%Input parameters of {\tt GaussMOP.m}:
%$$
%\begin{array}{l}
  %{\tt IC}= 8\\
 %{ n}: \mbox{ number of nodes of the simultaneous Gaussian quadrature rule}\\
  %\boldsymbol{\alpha} = [a,\; b,\;c,\; d]^T,   \;\; a,b,c,d \in \RR^{+},  \;\; c+1, d > a, \;\;  c,d > b.
%\end{array}
%$$
\noindent
References: \cite{Lima22}.
%%%%%%%%%%%%%%%%%%%%%%%%%%%%%%%%%%%%%%%%%%%%%%%%%%%%%%%%%%%%%%%%%%%%%%%%%%%%%%%%%%%%%%%%%%%%%%%%
%%%%%%%%%%%%%%%%%%%%%%%%%%%%%%%%%%%%%%%%%%%%%%%%%%%%%%%%%%%%%%%%%%%%%%%%%%%%%%%%%%%%%%%%%%%%%%%
\bigskip

\noindent
$\mbox{\bf MOP}_9:$ % Verified
{MOPs associated with the
confluent hypergeometric function}\label{example10.1}
\\ %$ $\\
Weights:
%$$\left( w^{(1)}(x), w^{(2)}(x) \right):= 
%\left(\frac{\Gamma(c)\Gamma(d)}{\Gamma(a) \Gamma(b)\Gamma(\delta)}x^{a-1} (1-x)^{\delta-1}\;  _2{F}_1 \left( \left[\begin{array}{@{}c@{}}c-b\\ d-b \end{array}\right],\delta,1-x \right),\frac{\Gamma(c)\Gamma(d)}{\Gamma(a) \Gamma(b)\Gamma(\delta)}x^{a-1} (1-x)^{\delta-1}\;  _2{F}_1 \left( \left[\begin{array}{@{}c@{}}c-b\\ d-b \end{array}\right],\delta,1-x \right) \right),
%$$ 
\begin{eqnarray*}
 w^{(1)}(x) & := & 
\frac{\Gamma(c)}{\Gamma(a) \Gamma(b)}e^{-x}x^{a-1} U( c-b, a-b+1,x),  \\
 w^{(2)}(x)& :=& \frac{\Gamma(c+1)}{\Gamma(a) \Gamma(b)}e^{-x}x^{a-1} U( c-b+1, a-b+1,x),
\end{eqnarray*} 
where   $ U(a,b,z)  $ is the confluent hypergeometric function \cite[p.504--505]{abr64}, computed by the {\tt Matlab} function {\tt  kummerU.m},
$$
 U(a,b,z)=\frac{1}{\Gamma(a)} \int_{0}^{\infty}e^{-zt} t^{a-1}(1+t)^{b-a-1}  dt, \qquad  \mathcal{R} (z) >0,  \mathcal{R} (a) >0,
$$
  $ a,b,c \in \RR^{+}, \; c > \max\{a,b\}.  $ \\
	
%where 
%$I_\nu(x)$ is  the modified Bessel function  of the first kind \cite{VanAssche2003},
%$$I_\nu(x)=\left(\frac{z}{2}\right)^\nu \sum_{k=0}^{\infty} \frac{\left(\frac{z}{2}\right)^{2k}}{k!\Gamma(\nu+k+1)},\quad \nu\in \RR.$$
\noindent
Interval: $\Delta^{(i)}=[0, \infty), i=1,2.$ \\

\noindent
Recurrence relation coefficients: \\
$$ \begin{array}{@{}l@{\hspace{.4mm}}l@{}l} %l@{}l@{}l}
 \mbox{\tt for}\;&i=0,&1,2,\ldots\\
&b_{2i}&=\frac{\displaystyle (2 i+1) (a+2 i) (b+2 i)}{\displaystyle c+3 i}-\frac{\displaystyle 2 i (a+2 i-1) (b+2 i-1)}{\displaystyle c+3 i-1},\\
& b_{2i+1}&=\frac{\displaystyle (2 i+2) (a+2 i+1) (b+2 i+1)}{\displaystyle c+3 i+2}-\frac{\displaystyle (2 i+1) (a+2 i) (b+2 i)}{\displaystyle c+3 i},\\
&c_{2i}&=\frac{\displaystyle 2i(a+2 i-1) (b+2 i-1)}{\displaystyle c+3 i-1}
        \Bigl(\frac{\displaystyle (2 i-1) (a+2 i-2) (b+2 i-2)}{\displaystyle 2 (c+3 i-2)} \Bigr.\\
 &&      - \frac{\displaystyle 2 i (a+2 i-1) (b+2 i-1)}{\displaystyle c+3 i-1}
     \Bigl.+\frac{\displaystyle (2 i+1) (a+2 i) (b+2 i)}{\displaystyle 2 (c+3 i)}\Bigr), \\
 %c_{2i+1}&=&\frac{\displaystyle 2 i(2 i+1) (a+2 i) (b+2 i)  (a+2 i-1) (b+2 i-1)}{\displaystyle 2 (c+3 i) (c+3 i-1)}
        %-\frac{\displaystyle (2 i+2) (a+2 i+1) (b+2 i+1) (2 i+1) (a+2 i) (b+2 i)}{\displaystyle 2 (c+3 i+2) (c+3 i+1)}\\
     %&&  - \frac{\displaystyle (2 i+1) (a+2 i) (b+2 i)}{\displaystyle (c+3 i)^2}+\frac{\displaystyle (2 i+1) (a+2 i) (b+2 i)}{\displaystyle (c+3 i)} \frac{\displaystyle (2 i+2) (a+2 i+1) (b+2 i+1)}{\displaystyle c+3 i+2},\\
%%%%%%%%%%%%%%%%%%%%%%%%%%%
&c_{2i+1}&=\frac{\displaystyle (2 i+1) (a+2 i) (b+2 i)}{\displaystyle c+3 i}\Bigl( \frac{\displaystyle  i  (a+2 i-1) (b+2 i-1)}{\displaystyle   (c+3 i-1)} \Bigr.\\
  &&  \Bigl.   -\frac{\displaystyle (2 i+1) (a+2 i) (b+2 i)}{\displaystyle  (c+3 i) } %\right.\\
   %  && \left.
	+ \frac{\displaystyle ( i+1) (a+2 i+1) (b+2 i+1)}{\displaystyle c+3 i+1}\Bigr),\\		
&d_{2i}&=\frac{\displaystyle 2 i (2 i+1) (a+2 i-1) (a+2 i) (b+2 i-1) (b+2 i) (c+i-1) }
        {\displaystyle (c+3 i-2) (c+3 i-1) (c+3 i) (c+3 i-1) (c+3 i) (c+3 i+1)}\\ 
				&&\times (c-a+i) (c-b+i)\\
& d_{2i+1}&=\frac{\displaystyle (2 i+1) (2 i+2) (a+2 i) (a+2 i+1) (b+2 i) (b+2 i+1)}
        {\displaystyle (c+3 i) (c+3 i+1) (c+3 i+2)}.
\end{array}
$$
\noindent
Coefficients (\ref{eq:D}), involved in the computation of the weights:
$$
\begin{array}{ll}
f_{1,1}= 1,& \\
f_{2,1}=  1,&
f_{2,2}=-\frac{ab}{ c (c+1)}.\\
%(\alpha+1)(\alpha+\nu +\alpha \nu+1)\Gamma(\alpha+1)\Gamma(\alpha+\nu+2).
\end{array}
$$
%\noindent
%Input parameters of {\tt GaussMOP.m}:
%$$
%\begin{array}{l}
  %{\tt IC}= 9\\
 %{ n}: \mbox{ number of nodes of the simultaneous Gaussian quadrature rule}\\
  %\boldsymbol{\alpha} = [a,\; b,\;c]^T,   \;\;  a,b,c \in \RR^{+}, \; c > \max\{a,b\}.  
%\end{array}
%$$
\noindent
References: \cite{Lima20}.

In the sequel, we denote the $k$--th  MOP % evaluated in  $  \boldsymbol{\alpha}$
 by   $  \mbox{\bf MOP}_k, $ for $ k \in \{  1,2 , \ldots, 9\} $. 
%%%%%%%%%%%%%%%%%%%%%%%%%%%%%%%%%%%%%%%%%%%%%%%%%%%
\section{{\tt Matlab} function {\tt ClassMOP.m}}
\label{sect:class} 
Here,  the {\tt Matlab} function {\tt ClassMOP.m} % and its corresponding algorithm 
for computing the coefficients of the  recurrence relation associated with different classes of MOPs  is described.

The {\tt Matlab} command  is 
\begin{center}
{\tt [$\boldsymbol{b},\boldsymbol{ c},\boldsymbol{d}, F$]=ClassMOP(IC,$ n, \boldsymbol{\alpha} $)}
\end{center}
The input parameters of {\tt ClassMOP.m} are:
\begin{itemize}
\item  {\tt IC}: kind of MOPs.
\item  {$ n$}: number of nodes of the simultaneous Gaussian quadrature rule.
\item  {$ \boldsymbol{\alpha} $}: vector of parameters characterizing the weights.
\end{itemize}
The output parameters of {\tt ClassMOP} are:
\begin{itemize}
\item $\boldsymbol{b}$: the main diagonal of $ H_n.$
\item $\boldsymbol{ c}$: the first subdiagonal of $ H_n.$
\item $\boldsymbol{ d}$: the second subdiagonal of $ H_n.$
\item $F: $  the $ 2 \times 2 $ lower triangular matrix in (\ref{eq:D}).
\end{itemize}
The list of input parameters for the considered classes of MOPs is summarized in Table~\ref{tab:inputp}.
%The algorithm is summarized in the following steps: 
%\begin{enumerate}[label={\bf Step \arabic*}.]
%\item Choice of the MOP. \label{en:1a}
%%\item Construction of the Hessenberg matrix $ H_n $.\label{en:2a}
%\item Transformation of $ H_n$ to the similar matrix $ \hat{H}_n :=S_n^{-1} H_n S_n. $   \label{en:3a}
%\item Computation of the simultaneous Gaussian quadrature rule.\label{en:8a}
%\end{enumerate}
%All steps are described in detail in the following sections. 
\begin{table}
  \caption{Input parameters of the {\tt Matlab} function {\tt ClassMOP.m}}
  \label{tab:inputp}
	\centering
\begin{tabular}{|c|c|c|}%{|@{\hspace{.1cm}}l@{\hspace{.1cm}}|c|c|c|c|@{}c@{}|}
\hline
$\mbox{\bf MOP}$  & \# nodes & $ \boldsymbol{\alpha} $ \\ 
\hline
\hline
$1$ & $ n$ &$[\alpha_0,\;\alpha_1,\; \alpha_2]^T, $  \;\; $\alpha_j >-1,\; j=0,1,2,  \;\; \alpha_1-\alpha_2 \notin \mathbb{Z}   $ \\
\hline
$2$ & $ n$ &$  [\alpha_1,\; \alpha_2]^T, $ \;\; $\alpha_j >-1,\; j=1,2  $ \\
\hline
$3$ & $ n$ &$ [\alpha_0,\;\alpha_1,\; \alpha_2]^T,  \;\; \alpha_0 >-1,\;\alpha_j >0, \; j=1,2,\; \alpha_1 \ne \alpha_2.  $ \\
\hline
$4$ & $ n$ &$  [\alpha_1,\; \alpha_2]^T, \;\;\alpha_1, \alpha_2\in \RR,  \;\; \alpha_1 \ne \alpha_2  $ \\
\hline
$5$ & $ n$ &$ [\beta]^T,   \;\; \beta >-1    $ \\
\hline
$6$ & $ n$ &$  [\alpha,\;  \nu]^T,   \;\; \alpha >-1,\;\; \nu\ge 0  $ \\
\hline
$7$ & $ n$ &$  [\beta,\;  \nu]^T,   \;\; \beta >0,\;\; \nu\ge -1  $ \\
\hline
$8$ & $ n$ &$ [a,\; b,\;c,\; d]^T,   \;\; a,b,c,d \in \RR^{+},  \;\; c+1, d > a, \;\;  c,d > b.$ \\
\hline
$9$ & $ n$ &  $ [a,\; b,\;c]^T,   \;\;  a,b,c \in \RR^{+}, \; c > \max\{a,b\}.  $ 
 \\
\hline
\end{tabular}\end{table}

%%%%%%%%%%%%%%%%%%%%%%%%%%%%%%%%%%%%%%%%%%%%%%%%%%%%%%%%%%%%%%%%%%%%%%%%%%%%%%%%%%%%%%%%%%%%%%%%
%\end{enumerate}
%\newtheorem{remark}{Remark}
%\begin{remark} All the considered MOPs depend on the parameter vector $  \boldsymbol{\alpha}.$  In the sequel, we denote the $k$--th  MOP evaluated in  $  \boldsymbol{\alpha}$ by  $  \mbox{\bf MOP}k(\boldsymbol{\alpha}), $ for $ k \in \{  1,2 , \ldots, 9\} $ and  fixed $  \boldsymbol{\alpha}$.  
%\end{remark}
%\begin{remark} All the considered MOPs depend on the parameter vector $  \boldsymbol{\alpha}.$  
%In the sequel, we denote the $k$--th  MOP % evaluated in  $  \boldsymbol{\alpha}$
 %by   $  \mbox{\bf MOP}k, $ for $ k \in \{  1,2 , \ldots, 9\} $. %$  \mbox{\bf MOP}k(\boldsymbol{\alpha}), $ for $ k \in \{  1,2 , \ldots, 9\} $ and  fixed $  \boldsymbol{\alpha}$.  
%\end{remark}

%%%%%%%%%%%%%%%%%%%%%%%%%%%%%%%%%%%%%%%%%%%%%%%%%%%
\section{{\tt Matlab} function {\tt GaussMOP.m}}
\label{sect:alg} 
Here,  the {\tt Matlab} function {\tt GaussMOP.m} % and its corresponding algorithm 
for computing the simultaneous Gaussian quadrature rule associated with  different kinds of weights,  is reported.

The {\tt Matlab} command for computing the simultaneous Gaussian quadrature rule is 
\begin{center}
{\tt [$\boldsymbol{x},\boldsymbol{ \omega}^{(1)},\boldsymbol{ \omega}^{(2)}$,\tt ier]=GaussMOP($\boldsymbol{b},\boldsymbol{ c},\boldsymbol{d},n, F$)}
\end{center}
The input parameters of {\tt GaussMOP.m} are:
\begin{itemize}
\item $\boldsymbol{b}$: the main diagonal of $ H_n.$
\item $\boldsymbol{ c}$: the first subdiagonal of $ H_n.$
\item $\boldsymbol{ d}$: the second subdiagonal of $ H_n.$
\item  {$ n$}: number of nodes of the simultaneous Gaussian quadrature rule.
\item $F: $  the $ 2 \times 2 $ lower triangular matrix in (\ref{eq:D}).
\end{itemize}
The output parameters of {\tt GaussMOP.m} are:
\begin{itemize}
\item $\boldsymbol{x}$: vector of
 nodes of the simultaneous  Gauss quadrature rule.
\item $\boldsymbol{ \omega}^{(1)}$: vector of weights of the simultaneous Gaussian quadrature rule corresponding to $w^{(1)}(x).$ 
\item $\boldsymbol{ \omega}^{(2)}$: vector of weights of the simultaneous Gaussian quadrature rule corresponding to $w^{(2)}(x).$
\item {\tt ier}:  is set to zero for normal return, otherwise       {\tt ier} is set to $j$ if the $j$--th node has not been
          determined after 30 iterations.
\end{itemize}
The algorithm is summarized in the following steps, described in detail in Sections~\ref{subs:1} and~\ref{subs:2}, respectively: 
\begin{itemize}%\begin{enumerate}%[label={\bf Step \arabic*}.]
%\item Choice of the MOP. \label{en:1a}
%\item Construction of the Hessenberg matrix $ H_n $.\label{en:2a}
\item Transformation of $ H_n$ to the similar matrix $ \hat{H}_n :=S_n^{-1} H_n S_n. $   \label{en:3a}
\item Computation of the simultaneous Gaussian quadrature rule.\label{en:8a}
\end{itemize}%\end{enumerate}
%All steps are the following sections. 
%
%%%%%%%%%%%%%%%%%%%%%%%%%%%%%%%%%%%%%%%%%%%%%%%%%%%%%%%%%%%%%%
%\Section{\ref{en:2a} Construction of the Hessenberg matrix $ H_n $}
\section{Transformation of $ H_n$ to the similar matrix $ \hat{H}_n :=S_n^{-1} H_n S_n $}\label{subs:1}
As described in  Section~\ref{sect:intro}, the computation of simultaneous Gaussian quadrature rules associated with MOPs relies on that of the eigenvalues $ x_j $ and   corresponding left and right eigenvectors $ \boldsymbol{u}^{(j)} $  and $ \boldsymbol{v}^{(j)} $, $ j=1,\ldots,n, $  respectively, of $ H_n. $
As noticed in \cite{VanAssche2023}, the latter eigenvalue problem is very ill--conditioned for all classes of MOPs listed in  Section~\ref{sect:MOP}. Furthermore,  the {\tt Matlab} function {\tt balance.m}, commonly applied to reduce the  eigenvalue condition number, does not improve the condition of the considered eigenvalue problem,  and, therefore, the {\tt Matlab} function {\tt eig.m} does not yield reliable results  \cite{VanAssche2023,TNP2023}. Indeed, in most cases, the eigenvalues computed by {\tt eig.m}  are complex conjugate, while the   MOPs have only  real zeros.

To reduce the eigenvalue condition number of the Hessenberg matrix $H_n$, a new  diagonal balancing procedure has been recently introduced  in \cite{TNP2023}, whose   main idea consists of transforming $ H_n $ into a similar  matrix 
 \begin{equation}\label{eq:bal} \hat{H}_n :=S_n^{-1} H_n S_n 
\end{equation}
 having the same Hessenberg  structure,  with $S_n=\mbox{\tt diag}(s_1,\ldots,{s}_n)$ a diagonal matrix, such that
$\mbox{\tt triu}( \hat{H}_n,-1) $ is symmetric.
 % $ \hat{H}_n $ has the main tridiagonal part symmetric.
%\\
%The new balancing algorithm is implemented in the {\tt Matlab} function {\tt Dscale.m}, displayed in the Appendix, Table~\ref{tab:Alg1}.%the implementation of the scaling algorithm%  displayed ib the  can be 
\\
  After this similarity transformation, the  condition  of the eigenvalue problem for   $ \hat{H}_n$ is  drastically reduced with respect to that of  $H_n,$ as shown in Figure~\ref{figu:1}, where
the condition numbers of the eigenvalues of $ H_n$ and $ \hat{H}_n, $ with $ n=20$ and parameters listed in Table~\ref{tab:MOP} for all the considered MOPs,  are displayed.  
%%%%%%%%%%%%%%%%%%%%%%%%%%%%%%%%%%%%%%%%%%%%%%%%%%%%
\begin{table}
  \caption{MOPs parameters used for computing the eigenvalue condition number of  $ H_{20}$ and $ \hat{H}_{20}, $  shown in Figure~\ref{figu:1}.}
  \label{tab:MOP}
	 \centering
\begin{tabular}{|c|l|}%{|@{\hspace{.1cm}}l@{\hspace{.1cm}}|c|c|c|c|@{}c@{}|}
\hline
{\tt IC}  & $\qquad \quad\boldsymbol{\alpha}$\\ 
\hline
1 & $[-0.5,\;-0.2,\;0.4]^T$\\
\hline
2 & $[-0.5,\;0.5]^T$\\
\hline
3 & $[-0.5,\; 0.2,\;0.4]^T$\\
\hline
4 & $[0.2,\;0.5]^T$\\ %=.2;alpha2=.5
\hline
%5 &$[-2,\;-0.5,\;0.2]^T$ \\ %a=-2; alpha1=-.5;alpha2=.2;
%\hline
5 & $[0.5]^T$\\ 
\hline
6 &$[-0.5,\;0.5]^T$ \\ % alpha=-.5; nu=.5;
\hline
7 &$[0.5,\;-0.5]^T$ \\ % C=.5; vu=-.5;
\hline
8 &$[1,\;1,\;3,\;2]^T$ \\ % C=.5; vu=-.5;
\hline
9 &$[3,\;2.5,\;7.5]^T$ \\ % C=.5; vu=-.5;
\hline
\end{tabular}\end{table}
%%%%%%%%%%%%%%%%%%%%%%%%%%%%%%%%%%%%%%%%
% file Prova_MOP_poster1.m in VanAssche-old
\begin{figure}[h]
  \centering
\hspace*{-.8cm}\includegraphics[width=14.8cm,height=7cm]{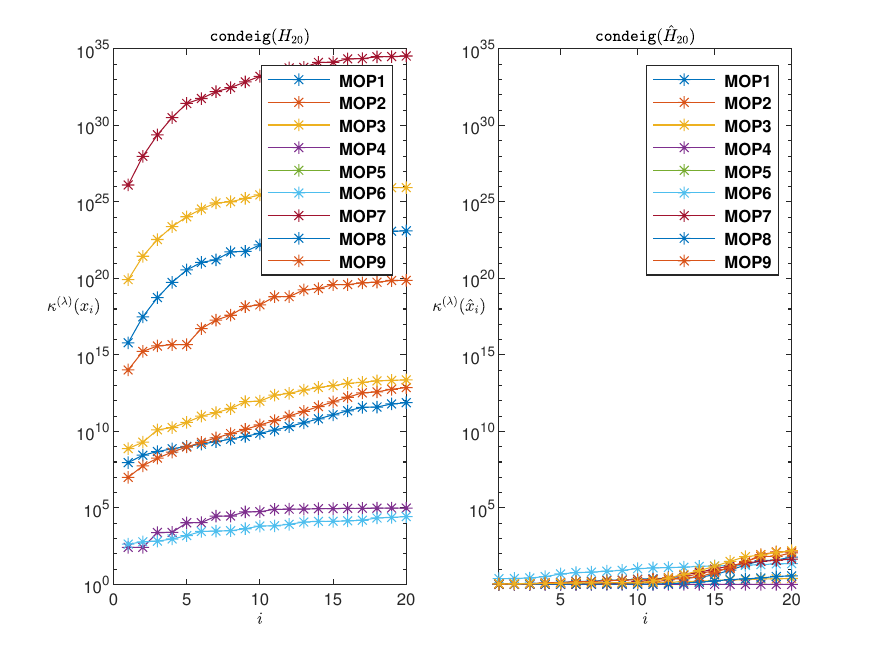}
  \caption{ Condition number of the eigenvalues of $H_{20}$ (left)  and   of the eigenvalues of $\hat{H}_{20}$ (right) for all the considered MOPs.}
 \label{figu:1}% \Description{Condition number of the eigenvalues of $H_{20}$ (left)  and   of the eigenvalues of $\hat{H}_{20}$ for all the considered MOPs.}
\end{figure}
Observe that the entries  of  the diagonal balancing matrix $ S_n=\mbox{\tt diag}(s_1,\ldots s_n)$ in (\ref{eq:bal}) , introduced in \cite{TNP2023}, grow as the factorial function, i.e., $ s_i \sim i! $  (see Table~\ref{tab:large1}, fourth column), 
for most MOPs. Hence, for $ i $ large enough, e.g.,  $ i > 170,$  $s_i> (2-2^{-52})2^{1023},$  the largest finite floating-point number in $\mbox{\rm IEEE}^{\mbox{\textregistered}}$
double precision. Therefore,  in such cases, the computation of $ s_i$ yields {\tt Inf} in {\tt Matlab}.
  
To overcome this drawback, we consider  here the same balancing technique applied  to the matrix $ H_n, $  but without explicitly computing the matrix $S_n. $ 
%Indeed, as shown in Section~\ref{subs:2}, 
Observe that we need to compute  only $  s_1 $ and $ s_ 2,$ i.e., the first two entries in the main diagonal  of $S_n,$   since they are involved in the  computation of  the weights of the simultaneous Gaussian quadrature rules (\ref{eq:wd}).
The {\tt Matlab} function {\tt  DscaleS2.m}, implementing the new balancing technique, is displayed in Table~\ref{tab:Alg1} in the Appendix. %, Section~\ref{subs:2}. 
%\end{remark}

\noindent
Since the considered MOPs are monic, i.e., $a_i=1, i=0,1,\ldots, n-1, $ then \cite{TNP2023} % the elements of the  vector $\boldsymbol{s}_n=[ s_1,\;s_2,\dots, s_n]^T $ are 
\begin{equation}\label{eq:ss}
s_i=\sqrt{c_1 c_2 \cdots c_i}, \quad i=1,\ldots, n.
\end{equation}
Furthermore,
denoted by
\begin{eqnarray*}
\hat{c}_i&=&\hat{h}_{i+1,i},\quad i=1,\ldots,n-1, \\
\hat{d}_i&=&\hat{h}_{i+1,i-1},\quad i=2,\ldots,n-1, 
\end{eqnarray*}
 defined $ s_0 \equiv 1,$ 
it turns out \cite{TNP2023},
\begin{eqnarray}
\hat{c}_i&=& c_i \frac{s_{i-1}}{s_{i}},\quad i=1,\ldots,n-1, \label{eq:hc0}\\
\hat{d}_i&=&d_i \frac{s_{i-2}}{s_{i}},\quad i=2,\ldots,n-1. \label{eq:hd}
\end{eqnarray}
%Moreover, %for large  $ i$,
For each MOP in Section~\ref{sect:MOP}, the asymptotic behavior  of $ c_i $ and $d_i, $  respectively denoted by  $c_{i \gg 0}$ and $d_{i \gg 0}, $ can be represented as the product of a constant times a power of $ i$, i.e.,
$$
 c_{i \gg 0} =\theta_c i^{\rho_c} \;\; \mbox{\rm and} \;\;  d_{i \gg 0} =\theta_d i^{\rho_d}, \quad 
\theta_c, \theta_d, \rho_c, \rho_d \in  \RR,
$$ 
except for $\mbox{\bf MOP}_3$, $\mbox{\bf MOP}_4$, %\ref{example5.1}, 
$\mbox{\bf MOP}_5$, where the even and odd cases need to be distinguished. Moreover, $c_{i \gg 0} $ and $ d_{i \gg 0}$ depend on $ \alpha_1$ and $ \alpha_2$ for $\mbox{\bf MOP}_3$ and $\mbox{\bf MOP}_4$, and on $ \beta$  for $\mbox{\bf MOP}_7$.
\\
The following Lemma holds.
\begin{lemma}\label{lemma:1}
Let  $c_{i}, $   $ d_{i } $, $\hat{c}_{i}, $   $\hat{d}_{i } $,  $i=0,1,\ldots, n-1,$ be, respectively,  the entries  of the  first and second subdiagonal of $ H_n $ and the entries  of the  first and second subdiagonal of $ \hat{H}_n= S_n^{-1} H_n S_n. $
Then
\begin{equation} \label{eq:hc}
\hat{c}_{i\gg 0}=\sqrt{c_i}, 
\qquad
%\end{equation}
%\begin{equation} \label{eq:hc}
\hat{d}_{i\gg 0}= \frac{d_i}{c_i}. 
\end{equation}
\end{lemma}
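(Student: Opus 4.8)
The plan is to establish the asymptotic identities in \eqref{eq:hc} directly from the exact formulas \eqref{eq:hc0}, \eqref{eq:hd} and the closed form \eqref{eq:ss} for the scaling factors $s_i$, using the hypothesis that $c_i$ and $d_i$ grow (asymptotically) like a constant times a power of $i$. First I would record the consequence of \eqref{eq:ss} that the successive ratios of the $s_i$ are controlled entirely by the $c_i$: since $s_i = \sqrt{c_1 c_2 \cdots c_i}$ and $s_0 \equiv 1$, we have
\begin{equation*}
\frac{s_{i-1}}{s_i} = \frac{1}{\sqrt{c_i}}, \qquad \frac{s_{i-2}}{s_i} = \frac{1}{\sqrt{c_{i-1}\,c_i}}.
\end{equation*}
Substituting the first of these into \eqref{eq:hc0} immediately gives $\hat{c}_i = c_i / \sqrt{c_i} = \sqrt{c_i}$ \emph{exactly}, for every $i$, so the first claim in \eqref{eq:hc} is in fact an identity and its "$i \gg 0$" form is trivial; I would state it that way for clarity. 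Substituting the second into \eqref{eq:hd} gives the exact relation $\hat{d}_i = d_i / \sqrt{c_{i-1}\,c_i}$.

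The remaining work is therefore to show that $d_i / \sqrt{c_{i-1}\,c_i}$ is asymptotically equal to $d_i / c_i$, i.e. that $\sqrt{c_{i-1}\,c_i} \sim c_i$ as $i \to \infty$. This reduces to the statement $c_{i-1}/c_i \to 1$. Here I invoke the hypothesis recorded just before the Lemma: for each MOP, $c_{i \gg 0} = \theta_c\, i^{\rho_c}$ (or, in the split cases $\mbox{\bf MOP}_3$, $\mbox{\bf MOP}_4$, $\mbox{\bf MOP}_5$, the even- and odd-indexed subsequences each have this form, possibly with different constants and exponents). In either situation,
\begin{equation*}
\frac{c_{i-1}}{c_i} \;\sim\; \frac{\theta_c (i-1)^{\rho_c}}{\theta_c\, i^{\rho_c}} \;=\; \Bigl(1 - \tfrac{1}{i}\Bigr)^{\rho_c} \;\longrightarrow\; 1,
\end{equation*}
and hence $\sqrt{c_{i-1}\,c_i} = c_i \sqrt{c_{i-1}/c_i} \sim c_i$. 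Therefore $\hat{d}_i = d_i/\sqrt{c_{i-1}\,c_i} \sim d_i/c_i$, which is the second assertion of \eqref{eq:hc}. For the split cases one notes that $c_{i-1}$ and $c_i$ lie in opposite parity classes, so the ratio involves $\theta_c^{\rm even}/\theta_c^{\rm odd}$ times $i^{\rho_c^{\rm even}-\rho_c^{\rm odd}}$ shifted by one index; I would check, MOP by MOP, that these parity-dependent leading terms actually coincide (equal $\theta_c$ and equal $\rho_c$ across parities) for the relevant families, so that the ratio still tends to $1$. If for some family the parities genuinely differ, the claim $\hat d_{i\gg0}=d_i/c_i$ must be read as holding along each parity subsequence with the matching-parity $c_i$; I would flag that reading.

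The only real obstacle is bookkeeping rather than mathematics: one must confirm that the "asymptotically a constant times a power of $i$" description given for $c_i$ is accurate for every one of $\mbox{\bf MOP}_1$ through $\mbox{\bf MOP}_9$, and in particular that in the even/odd split cases the leading behavior is parity-independent (or else interpret the conclusion along subsequences). This is a finite check against the explicit recurrence coefficients listed in Section~\ref{sect:MOP}, and once it is in hand the two displayed identities follow from the exact relations above by the elementary limit $\bigl(1-\tfrac1i\bigr)^{\rho_c}\to1$. I would present the argument in the order: (i) derive $s_{i-1}/s_i$ and $s_{i-2}/s_i$ from \eqref{eq:ss}; (ii) get the exact forms $\hat c_i=\sqrt{c_i}$ and $\hat d_i=d_i/\sqrt{c_{i-1}c_i}$; (iii) invoke $c_{i-1}/c_i\to1$ to pass to $\hat d_{i\gg0}=d_i/c_i$.
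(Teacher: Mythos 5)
Your proposal is correct and follows essentially the same route as the paper: substitute the closed form (\ref{eq:ss}) for $s_i$ into (\ref{eq:hc0}) and (\ref{eq:hd}) and simplify. Your version is in fact slightly cleaner, since you observe that $\hat{c}_i=\sqrt{c_i}$ holds \emph{exactly} by telescoping and isolate the only genuinely asymptotic step as $c_{i-1}/c_i\to 1$ --- a condition that Table~\ref{tab:large1} confirms, because the $c_{i\gg 0}$ column is parity--independent even for $\mbox{\bf MOP}_3$, $\mbox{\bf MOP}_4$, $\mbox{\bf MOP}_5$ --- whereas the paper works with the asymptotic form $s_i=\theta_c^{i/2}(i!)^{\rho_c/2}$ throughout.
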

\begin{proof}
 By (\ref{eq:ss}), 
$$
%s_i= s_{i-1} \sqrt{\frac{c_i}{a_{i-1}}}= s_{i-1} \sqrt{\theta_c i^{\rho_c}}.
s_i= s_{i-1} \sqrt{c_i}= s_{i-1} \sqrt{\theta_c i^{\rho_c}}.
$$
Then
$$
s_i= \theta_c^{\frac{i}{2}} i!^{\frac{\rho_c}{2}}.
$$
Moreover, by \ref{eq:hc0},
%By \ref{A1.9} in Table~ 
$$
\hat{c}_{i \gg 0} = c_i \frac{s_{i-1}}{s_i} 
                  =  \theta_c i^{\rho_c} \frac{\theta_c^{\frac{i-1}{2}} (i-1)!^{\frac{\rho_c}{2}}}{\theta_c^{\frac{i}{2}} i!^{\frac{\rho_c}{2}}}
									= \sqrt{ \theta_c i^{\rho_c}}=\sqrt{c_{i\gg 0}},
$$
and,  by \ref{eq:hd},
$$
\hat{d}_{i \gg 0} = d_i \frac{s_{i-2}}{s_i} 
                  = \theta_d i^{\rho_d} \frac{\theta_c^{\frac{i-2}{2}} (i-2)!^{\frac{\rho_c}{2}}}{\theta_c^{\frac{i}{2}} i!^{\frac{\rho_c}{2}}}
									= \frac{ \theta_d}{\theta_c} i^{\rho_d-\rho_c}
									=\frac{d_{i\gg 0}}{c_{i\gg 0}}.
$$
\end{proof}
%By Lemma~\ref{lemma:1}, 
The asymptotic behavior of the coefficients $c_{i}, $   $ d_{i } $, $\hat{c}_{i}, $   $\hat{d}_{i } $,  $i=0,1,\ldots, n-1$, and $ s_i, $ $i=1,\ldots,n, $ are displayed in Table~\ref{tab:large1} for all the considered MOPs.
\begin{table*}
  \caption{Asymptotic behavior of the coefficients $c_{i}, $   $ d_{i } $, $\hat{c}_{i}, $   $\hat{d}_{i } $,  $i=0,1,\ldots, n-1$, and $ s_i, $ $i=1,\ldots,n. $}
  \label{tab:large1}
\begin{tabular}{|@{\hspace{.1cm}}c@{\hspace{.1cm}}|@{\hspace{.1cm}}c@{\hspace{.1cm}}|@{\hspace{.1cm}}c@{\hspace{.1cm}}|@{\hspace{.1cm}}c@{\hspace{.1cm}}|@{\hspace{.1cm}}c@{\hspace{.1cm}}|@{\hspace{.1cm}}c@{\hspace{.1cm}}|}%{|@{\hspace{.1cm}}l@{\hspace{.1cm}}|c|c|c|c|@{}c@{}|}
\hline
$\mbox{\bf MOP}$  & $ {  c_{i \gg 0}}$ &   ${  d_{i \gg 0}}$  &$ s_{i \gg 0}  $ & $ {   \hat{c}_{i \gg 0}}$  & $ {   \hat{d}_{i \gg 0}}$ \\ 
\hline
\hline
%\begin{tabular}{@{}c@{}}
$\mbox{\bf MOP}_1$
%Jacobi--Pi\~neiro 
%\\ $ \alpha_0=1,\; \alpha_1=.2,\; \alpha_2=.4$ 
%\end{tabular} 
& $ {   3\left(\frac{4}{27}\right)^2 }$ 
&$ {   \left(\frac{4}{27}\right)^3 }$  
& $ {   3^{i/2}\left(\frac{4}{27}\right)^i}$  
& $ {   \sqrt{ 3}\left(\frac{4}{27}\right) }$ 
&$ {  \frac{1}{3} \frac{4}{27}}$  \\
\hline
%\begin{tabular}{@{}l@{}}
 $\mbox{\bf MOP}_2$
%Laguerre \\ first kind 
 %\\ $ \alpha_0=-.5,\; \alpha_1=.5$ 
%\end{tabular} 
& $ {   3\left(\frac{i}{2}\right)^2 }$ & $ {  \left(\frac{i}{2}\right)^3 }$ & $ {  \left(\frac{\sqrt{3}}{2}\right)^i  i!}$  &$ {   \frac{\sqrt{3}}{2} i }$  &$ {  \frac{i}{6}} $  \\
\hline
%\begin{tabular}{@{}l @{}}
$\mbox{\bf MOP}_3$ 
%Laguerre \\ second kind
% \\ $ \alpha_0=1,\; \alpha_1=.1 \; \alpha_2=2;$ 
%\end{tabular} 
& $ {  \frac{i^2}{2}\frac{\alpha_1^2+\alpha_2^2}{\alpha_1^2\alpha_2^2} }$
 & $ { \frac{(-1)^i i^3 (\alpha_2-\alpha_1)}{2\left(\alpha_1^{2+(-1)^i}\alpha_2^{2-(-1)^i}\right)} }$ 
 & $ {  \left(\frac{\alpha_1^2+\alpha_2^2}{2\alpha_1^2\alpha_2^2}\right)^{\frac{i}{2}} i!}$
 &$ %{ }
\sqrt{\frac{\alpha_1^2+\alpha_2^2}{2(\alpha_1^2\alpha_2^2)}}i $  
 &$ {  \frac{ (-1)^ii(\alpha_2-\alpha_1)}{\alpha_1^{(-1)^i}\alpha_2^{(-1)^{i+1}}(\alpha_1^2+\alpha_2^2)} }$   \\
\hline
$\mbox{\bf MOP}_4$ 
%Hermite 
& $ {  \frac{i}{2}}$
& $   \frac{(-1)^i i(\alpha_1-\alpha_2)}{8}$
& $ {  \sqrt{ \frac{i!}{2^{i}}}}$
& $ {  \sqrt{\frac{i}{2}}}$
& $ {  \frac{(-1)^i (\alpha_1-\alpha_2)}{4}}$
\\
%\hline
%Jacobi--Laguerre  
%& $ {  \frac{i^2}{2}}$
%& $ {  \frac{1-(-1)^i}{2} \frac{ i^3}{2}}$
%& $ {  \sqrt{\frac{i!}{2^\frac{i}{2}}}}$
%& $ {  \frac{i}{\sqrt{2}}}$
%& $ {  \frac{1-(-1)^i}{2} i}$
 %\\
\hline
$\mbox{\bf MOP}_5$ 
% Laguerre--Hermite 
& $ {  \frac{i}{4}}$
& $ {  (-1)^{i+1} \frac{\sqrt{i^3}}{8}}$
& $ {  \sqrt{\frac{i!}{4^i}}}$
& $ {  \sqrt{\frac{i}{4}}}$
& $ {  (-1)^{i+1} \sqrt{\frac{i}{4}}}$
 \\
\hline
%\begin{tabular}{@{}l@{}}
$\mbox{\bf MOP}_6$
%Modified Bessel \\ function $K_{\nu}$
%\end{tabular} 
& $ {  3 i^4}$
& $ {  i^6}$
& $ {  3^{\frac{i}{2}} (i!)^2}$
& $ {  \sqrt{3}i^2}$
& $ {  \frac{1}{3}i^2}$
 \\
\hline
%\begin{tabular}{@{}l@{}}
 $\mbox{\bf MOP}_7$ 
%Modified Bessel \\ function $I_{\nu}$
% \end{tabular} 
& $ {  \frac{i^2}{\beta^2} }$
& $ {  \frac{i^2}{\beta^4} }$
& $ {  {\frac{i!}{\beta^{i}}} }$
& $ {  i}$
& $ {  \frac{i}{\beta^2}}$
\\
\hline
%\begin{tabular}{@{}l@{}}
$\mbox{\bf MOP}_8$
%Gauss' hypergeometric \\ function 
 %\end{tabular} 
& $ {  3 \left( \frac{4}{27}\right)^2 }$
& $ {  \left( \frac{4}{27}\right)^3 }$
& $ {  \left(\sqrt{3}\frac{4}{27}\right)^i }$
& $ {  \sqrt{3}\frac{4}{27}}$
& $ {  \frac{1}{3}\frac{4}{27}}$
\\
\hline
%\begin{tabular}{@{}l@{}}
$\mbox{\bf MOP}_9$
%confluent hypergeometric \\ function 
% \end{tabular} 
& $ {   \frac{280}{81} i^2}$
& $ {  3^{-3-3\frac{1+(-1)^i}{2}} 2^6 i^3}$
& $ {  \left(\frac{280}{81}\right)^\frac{i}{2} i! } $ % \left(\sqrt{3}\frac{4}{27}\right)^i }$
& $ {   \sqrt{\frac{280}{81}} i}$
& $ {  \frac{1}{35}3^{-3\frac{1+(-1)^i}{2}} 2^3 i}$
%& $ {  \frac{1}{35}\frac{3^{-3\frac{1+(-1)^i}{2}}}{ 2^3} i}$
 \\
\hline
\end{tabular}\end{table*}
We observe that, except for $\mbox{\bf MOP}_1$ and $\mbox{\bf MOP}_8$, whose coefficients
 $ {\displaystyle c_{i \gg 0}}, {\displaystyle d_{i \gg 0}},   {\displaystyle  \hat{c}_{i \gg 0}}$ and $ {\displaystyle  \hat{d}_{i \gg 0}}$  
 are independent of $ i, $ 
 % the multiple Jacobi--Pi\~{n}eiro polynomials and for the MOPs associated with the Gauss' hypergeometric  function, 
$$
\lim_{i \rightarrow \infty } \frac{  {\displaystyle  \hat{c}_{i \gg 0}}}{ {\displaystyle c_{i \gg 0}}} =0, \quad 
\lim_{i \rightarrow \infty } \frac{ {\displaystyle  \hat{d}_{i \gg 0}}}{{\displaystyle d_{i \gg 0}}} =0,
$$
and
$$
 {\displaystyle c_{i \gg 0}}  <  |{\displaystyle d_{i \gg 0}}|, \quad  {\displaystyle  \hat{c}_{i \gg 0}} > |{\displaystyle  \hat{d}_{i \gg 0}}|.
$$
Therefore, the new balancing technique has the effect of  reducing the size of the entries of the second lower subdiagonal of the Hessenberg matrix  $\hat{H}_n$. As a consequence, the condition numbers of the eigenvalues of $ \hat{H}_n $ are drastically reduced with respect to those of $ H_n$ (see Figure~\ref{figu:1}). 
%%%%%%%%%%%%%%%%%%%%%%%%%%%%%%%%%%%%%%%%%%%%%%%%%%%%%%%%%%%%%%%%%%%%%%%%%%%%%%%%%%%%
\section{Computation of the simultaneous Gaussian quadrature rule}\label{subs:2}%Computation of the eigenvalues and left and right eigenvectors of  $ \hat{H}_n $}
By %Lemma~\ref{lemma:1},
Theorem~\ref{th:quads},
  the eigenvalues of ${H}_n$ and the associated  left and right eigenvectors are needed for  computing the simultaneous Gaussian quadrature rule associated with MOPs.
Let $ \hat{x}_j, \;\hat{\boldsymbol{u}}^{(j)}, $  and $\hat{\boldsymbol{v}}^{(j)}, \; j=1,\ldots,n, $ be the eigenvalues and corresponding left and right eigenvectors of  $ \hat{H}_n, $ i.e.,
$$
 \hat{H}_n \hat{V}_n = \hat{V}_n \hat{X}_n, \quad  \hat{U}_n^T \hat{H}_n  = \hat{X}_n \hat{U}_n^T,
$$
with % \mbox{\rm with}
$$
  \hat{X}_n=\mbox{\rm diag}(\hat{x}_1,\ldots, \hat{x}_n),\; 
\hat{V}_n=[\hat{\boldsymbol{v}}^{(1)}, \ldots, \hat{\boldsymbol{v}}^{(n)}], \; \mbox{\rm and}\;
\hat{U}_n=[\hat{\boldsymbol{u}}^{(1)}, \ldots, \hat{\boldsymbol{u}}^{(n)}]. $$
 All the eigenvalues and corresponding left and right eigenvectors  of $ H_n$ can be computed applying the {\tt Matlab} function {\tt eig.m} to the better conditioned matrix $ \hat{H}_n $,  with $\mathcal{O}(n^3)$
computational complexity and $\mathcal{O}(n^2)$ memory.
Since 
$ \hat{H}_n=S_n^{-1} H_n S_n, $ then $  {U}_n= S_n^{-1} \hat{U}_n, $ and  $  {V}_n= S_n \hat{V}_n. $
\\
 By (\ref{eq:weights}), for $j=1,\ldots,n,$
%$$
%\begin{array}{@{}ll}
%\begin{array}{l}
%{\displaystyle \omega_j^{(1)}}=\frac{\displaystyle  v_1^{(j)} f_{1,1}u_1^{(j)}}{\displaystyle \boldsymbol{u}^{(j)^T} \boldsymbol{v}^{(j)}}=
%%\frac{\displaystyle  s_1\hat{v}_1^{(j)} f_{1,1}s_1^{-1}\hat{u}_1^{(j)}}{\displaystyle \hat{\boldsymbol{u}}^{(j)^T} S_n S_n^{-1}\hat{\boldsymbol{v}}^{(j)}}
%\frac{\displaystyle  \hat{v}_1^{(j)} f_{1,1}\hat{u}_1^{(j)}}{\displaystyle \hat{\boldsymbol{u}}^{(j)^T} \hat{\boldsymbol{v}}^{(j)}},\\
%{\displaystyle \omega_j^{(2)}}=\frac{\displaystyle v_1^{(j)}\left(  f_{2,1}u_1^{(j)}+f_{2,2}u_2^{(j)}\right)}{\displaystyle  \boldsymbol{u}^{(j)^T} \boldsymbol{v}^{(j)}}=
%\frac{\displaystyle s_1v_1^{(j)}\left(  f_{2,1}s_1^{-1}\hat{u}_1^{(j)}+f_{2,2}s_2^{-1}\hat{u}_2^{(j)}\right)}{\displaystyle \hat{\boldsymbol{u}}^{(j)^T} \hat{\boldsymbol{v}}^{(j)}},
%\end{array}&
%\quad j=1,\ldots,n.
%\end{array}
%$$
\begin{equation}\label{eq:wd}
%\begin{array}{l}
\begin{array}{@{}l@{}}
{\displaystyle \omega_j^{(1)}}=\frac{\displaystyle  v_1^{(j)} f_{1,1}u_1^{(j)}}{\displaystyle \boldsymbol{u}^{(j)^T} \boldsymbol{v}^{(j)}} =
\frac{\displaystyle  \hat{v}_1^{(j)} f_{1,1}\hat{u}_1^{(j)}}{\displaystyle \hat{\boldsymbol{u}}^{(j)^T} \hat{\boldsymbol{v}}^{(j)}},\\
{\displaystyle \omega_j^{(2)}}=\frac{\displaystyle v_1^{(j)}\left(  f_{2,1}u_1^{(j)}+f_{2,2}u_2^{(j)}\right)}{\displaystyle  \boldsymbol{u}^{(j)^T} \boldsymbol{v}^{(j)}} =
\frac{\displaystyle s_1\hat{v}_1^{(j)} \left(  f_{2,1}s_1^{-1}\hat{u}_1^{(j)}+f_{2,2}s_2^{-1}\hat{u}_2^{(j)}\right)}{\displaystyle \hat{\boldsymbol{u}}^{(j)^T} \hat{\boldsymbol{v}}^{(j)}}.
\end{array}
%\qquad
%\end{array}
\end{equation}

Hence, the computation of the nodes and the set of weights of the simultaneous Gaussian quadrature rule relies only on the eigenvalues of $ \hat{H}_n $ and corresponding left and right  normalized eigenvectors,  on  $ f_{1,1},  f_{2,1} $ and $ f_{2,2} $  (\ref{eq:D}),
%$$\left[\begin{array}{cc} \end{array} \right],$$
 and on the first two diagonal entries of $ S_n. $ 
%\quad 
%{\displaystyle \omega_j^{(2)}}=\frac{\displaystyle v_1^{(j)}\left(  f_{2,1}u_1^{(j)}+f_{2,2}u_2^{(j)}\right)}{\displaystyle  \boldsymbol{u}^{(j)^T} \boldsymbol{v}^{(j)}},
%%\end{array}
%\quad j=1,\ldots,n,

Here, we propose a  more efficient approach to compute the eigenvalues and corresponding left and right eigenvectors of  $ \hat{H}_n, $   with $\mathcal{O}(n^2)$
computational complexity and $\mathcal{O}(n)$ memory, based on the Ehrlich--Aberth method \cite{Aberth,Bini,Ehrlich1}.

Given an initial guess %  approximation 
$\hat{\boldsymbol{x}}^{(0)^T}=\left[
\begin{array}{cccc} 
\hat{x}_1^{(0)}, & \hat{x}_2^{(0)}, & \cdots, \hat{x}_{n-1}^{(0)}, & \hat{x}_n^{(0)} 
\end{array}\right]^T$
of all the eigenvalues of $\hat H_n$, 
they can be simultaneously computed by the Ehrlich--Aberth iteration \cite{Aberth,Bini,Ehrlich1},
$$
\hat{x}_j^{(\ell+1)}=\hat{x}_j^{(\ell)}-\frac{\frac{\displaystyle \hat{p}_n(\hat{x}_j^{(\ell)})}{\displaystyle \hat{p}_n'(\hat{x}_j^{(\ell)})}}{1-\frac{\displaystyle \hat{p}_n(\hat{x}_j^{(\ell)})}{\displaystyle \hat{p}_n'(\hat{x}_j^{(\ell)})} {\displaystyle \sum_{^{k=1}_{k\ne j}}^{n}{\displaystyle \frac{1}{\hat{x}_j^{(\ell)}-\hat{x}_k^{(\ell)}}}}},\qquad j=1,\ldots,n,
$$
where  $ \hat{p}_i(x), \; i=0,1\ldots,n, $ are non--monic MOPs satisfying the recurrence relation whose coefficients are those of the balanced  matrix $\hat{H}_{n,n+1}, $
\begin{equation} \label{eq:MOPT} 
\begin{array}{@{}l@{}l@{}}
\left\{ \begin{array}{@{}l@{}}
\hat{p}_{-2}(x)=0,  \\
\hat{p}_{-1}(x)=0,  \\
%x\hat{p}_{i}(x) = \hat{c}_{i-1} \hat{p}_{i+1}(x)  + \hat{b}_i \hat{p}_{i}(x)  + \hat{c}_i \hat{p}_{i-1}(x)  + \hat{d}_i \hat{p}_{i-2}(x), \\
\hat{c}_{i-1} \hat{p}_{i+1}(x)= (x-  \hat{b}_i) \hat{p}_{i}(x)  -\hat{c}_i \hat{p}_{i-1}(x)  - \hat{d}_i \hat{p}_{i-2}(x),
\end{array}\right. & i=0, \ldots, n-1,
\end{array}
\end{equation}
with $\hat{c}_i=\hat{a}_{i-1}, \; i=1,\dots,n-1. $
Hence,  $\hat{p}_n(\hat{x}_j^{(i)}) $ and $ \hat{p}'_n(\hat{x}_j^{(i)})$ need to be computed at  iteration $ \ell+1$ of the Ehrlich--Aberth method.

To this end,
(\ref{eq:MOPT})  is written in matrix form,  obtaining
%$$ 
%\begin{array}{@{}l}
%\hat{F}_n(x)  \hat{\boldsymbol{p}}_n(x):= 
%\left(\hat{H}_{n,n+1}-x[I_n|O_{1,n}]\right)\hat{\boldsymbol{p}}_n(x) \label{eq:null1} \\
%= \left[\begin{array} 
%{@{\hspace{1.5mm}}c@{\hspace{1.5mm}}c@{\hspace{1.5mm}}c@{\hspace{1.5mm}}c@{\hspace{1.5mm}}c@{\hspace{1.5mm}}c@{\hspace{1.5mm}}c@{\hspace{1.5mm}}c@{\hspace{1.5mm}}c@{\hspace{1.5mm}}c@{\hspace{1.5mm}}}
 %\bt_0-x& \at_0 & 0 & 0 & 0 & 0 & \ldots & 0 & 0 \\
%\ct_1 & \bt_1-x & \at_1 & 0 & 0 & 0 & \ldots & 0 & 0 \\
%\dt_2 & \ct_2 & \bt_2-x & \at_2 & 0 & 0 & \ldots & 0 & 0 \\
%0 & \dt_3 & \ct_3 & \bt_3-x & \at_3 & 0 & \ldots & 0 & 0 \\
%0 & 0 & \dt_4 & \ct_4 & \bt_4-x & \at_4 & \ldots & 0 & 0 \\
%\vdots &  &  & \ddots & \ddots & \ddots & \ddots & \vdots & \vdots \\
%0 & \ldots & 0 & 0 & \dt_{n-2} & \ct_{n-2} & \bt_{n-2}-x & \at_{n-2}& 0 \\
%0 & \ldots & 0 & 0 & 0 & \dt_{n-1} & \ct_{n-1} & \bt_{n-1}-x &   \at_{n-1}\\
%\end{array} \right]%\in  \RR^{n \times (n+1)}.
%\left[\begin{array}{@{}c@{}} 
%\Pt_0(x) \\ \Pt_1(x)\\  \Pt_2(x)\\  \Pt_3(x)\\  \Pt_4(x)\\ \vdots \\  \Pt_{n-1}(x)\\  \Pt_{n}(x) 
%\end{array}\right]= {\bf 0}_n. 
%\nonumber 
%%&:= & \hat{F}(x)  \hat{\boldsymbol{p}}_n(x)=0, 
%\end{array}
%$$
%%%%%%%%%%%%%%%%%%%%%%%%%%
$$ 
\begin{array}{@{}l}
\hat{F}_n(x)  \hat{\boldsymbol{p}}_n(x):= 
\left(\hat{H}_{n,n+1}-x[I_n|\boldsymbol{o}_{n}]\right)\hat{\boldsymbol{p}}_n(x) \label{eq:null1} \\
= \left[\begin{array} 
{@{\hspace{1.5mm}}c@{\hspace{1.5mm}}c@{\hspace{1.5mm}}c@{\hspace{1.5mm}}c@{\hspace{1.5mm}}c@{\hspace{1.5mm}}c@{\hspace{1.5mm}}c@{\hspace{1.5mm}}c@{\hspace{1.5mm}}c@{\hspace{1.5mm}}}
 \bt_0-x& \at_0 & 0 & 0 & 0  & \ldots & 0 & 0 \\
\ct_1 & \bt_1-x & \at_1 & 0  & 0 & \ldots & 0 & 0 \\
\dt_2 & \ct_2 & \bt_2-x & \at_2  & 0 & \ldots & 0 & 0 \\
0 & \dt_3 & \ct_3 & \bt_3-x & \at_3  & \ldots & 0 & 0 \\
%0 & 0 & \dt_4 & \ct_4 & \bt_4-x & \at_4 & \ldots & 0 & 0 \\
\vdots &    & \ddots & \ddots & \ddots & \ddots & \vdots & \vdots \\
0 & \ldots & 0  & \dt_{n-2} & \ct_{n-2} & \bt_{n-2}-x & \at_{n-2}& 0 \\
0 & \ldots & 0  & 0 & \dt_{n-1} & \ct_{n-1} & \bt_{n-1}-x &   \at_{n-1}\\
\end{array} \right]%\in  \RR^{n \times (n+1)}.
\left[\begin{array}{@{}c@{}} 
\Pt_0(x) \\ \Pt_1(x)\\  \Pt_2(x)\\  \Pt_3(x)\\ 
%\Pt_4(x)\\
\vdots \\  \Pt_{n-1}(x)\\  \Pt_{n}(x) 
\end{array}\right]= \boldsymbol{o}_{n}. 
\nonumber 
%&:= & \hat{F}(x)  \hat{\boldsymbol{p}}_n(x)=0, 
\end{array}
$$
%%%%%%%%%%%%%%%%%%%%%%%%%%%%%
Therefore,  $\hat{\boldsymbol{p}}_n(x)$ belongs to the null--space of $ \hat{F}_n(x), $  a full row rank matrix.
%Given $ {x} \in \RR, $ $ \Pt_{n}({x}) $ can be evaluated by computing  the vector belonging to the null--space of $ \hat{F}({x}). $ 
Such a vector  is computed by applying a sequence of $n $ Givens rotations, $G_i \in  \RR^{(n+1)\times (n+1)},\; i=1,\ldots,n, $ to the right of $\hat{F}_n(x) $ %obtaining a lower triangular matrix 
\cite{TNP2023},
 $$
\hat{F}_n(x)Q_n = [ L_n,\;\boldsymbol{o}_{n}],
$$
with $ L_n \in  \RR^{n\times n}$ lower triangular, % $ O_n={\tt zeros}(n,1),$ 
and $Q_n = G_1^T G_2^T \cdots G_n^T. $ 
Then, the last column of $ Q_n $ spans the null--space of  $\hat{F}_n(x). $

%Moreover, 
In order to compute $ \Pt'_{n}(\tilde{x}) $,  we differentiate (\ref{eq:null1}), obtaining
\begin{equation}\label{eq:deriv}
\hat{F}_n(x)  \hat{\boldsymbol{p}}'_n(x)+\hat{F}'(x)  \hat{\boldsymbol{p}}_n(x)=\boldsymbol{o}_{n}.
\end{equation}
Since 
$
\hat{F}'(x) =\left[\begin{array}{@{}c|c@{}}-I_n & \boldsymbol{o}_{n} \end{array} \right]
$
and $ \hat{p}'_0(x)=0,$  %then (\ref{eq:deriv}) becomes
then $ [\hat{p}'_1(x), \hat{p}'_2(x),\ldots,\hat{p}'_{n-1}(x), \hat{p}'_{n}(x)]^T $ is the solution of the lower triangular linear system
$$
\left[\begin{array} 
{@{\hspace{1.5mm}}c@{\hspace{1.5mm}}c@{\hspace{1.5mm}}c@{\hspace{1.5mm}}c@{\hspace{1.5mm}}c@{\hspace{1.5mm}}c@{\hspace{1.5mm}}c@{\hspace{1.5mm}}c@{\hspace{1.5mm}}c@{\hspace{1.5mm}}}
 \at_0 & 0 & 0 & 0 & 0 & \ldots & 0  \\
 \bt_1-x & \at_1 & 0 & 0 & 0 & \ldots & 0  \\
 \ct_2 & \bt_2-x & \at_2 & 0 & 0 & \ldots & 0 \\
 \dt_3 & \ct_3 & \bt_3-x & \at_3 & 0 & \ldots & 0  \\
% 0 & \dt_4 & \ct_4 & \bt_4-x & \at_4 & \ldots & 0 & 0 \\
\vdots &     \ddots & \ddots & \ddots & \ddots & \vdots & \vdots\\
 0 &\ldots &   \dt_{n-2} & \ct_{n-2} & \bt_{n-2}-x & \at_{n-2}& 0 \\
 0 &\ldots &   0 & \dt_{n-1} & \ct_{n-1} & \bt_{n-1}-x &   \at_{n-1}\\
\end{array} \right]%\in  \RR^{n \times (n+1)}.
\left[\begin{array}{@{}c@{}} 
 \Pt'_1(x)\\  \Pt'_2(x)\\  \Pt'_3(x)\\  \Pt'_4(x)\\  \vdots \\  \Pt'_{n-1}(x)\\  \Pt'_{n}(x) 
\end{array}\right]=
\left[\begin{array}{@{}c@{}} 
\Pt_0(x) \\ \Pt_1(x)\\  \Pt_2(x)\\  \Pt_3(x)\\  \vdots \\  \Pt_{n-2}(x)\\  \Pt_{n-1}(x) 
\end{array}\right].
%\left[\begin{array}{@{}c@{}} 
 %\Pt_1(x)\\  \Pt_2(x)\\  \Pt_3(x)\\  \Pt_4(x)\\  \Pt_5(x)\\ \vdots \\  \Pt_{n-1}(x) 
%\end{array}\right].
$$
The {\tt Matlab} function  {\tt one\_step\_Newton.m} computing $\Pt_{n}(x)$ and $ \Pt'_{n}(x)$ with %${O} (n) $ 
$ 35 n$ flops,  making use of  the {\tt Matlab} function {\tt givens.m}, % floating point operation
 is displayed in Table~\ref{tab:AlgG} of the Appendix. % The  is called in this function.

The  sequence of approximations generated by the Ehrlich--Aberth method converges cubically to the eigenvalues of $\hat{H}_n$, or even faster if  the implementation is in the
Gauss--Seidel style, since the eigenvalues  are simple. In practice, as noticed in \cite{Bini}, the
Ehrlich--Aberth iteration exhibits good global convergence properties, though no theoretical
results seem to be known about global convergence.

The main requirements for the success of the Ehrlich--Aberth method are
 a fast, robust, and stable computation of the Newton correction $ \Pt_{n}(x)/\Pt'_{n}(x),$ and 
 a good set of initial approximations for the zeros, $\hat{\boldsymbol{x}}^{(0)},$ so that
the number of iterations needed for convergence is not too large.
\\
Since the eigenvalues of $ \hat{H}_n$ are real,  different approaches can be taken into account to compute the initial guess of the vector
$  \hat{\boldsymbol{x}}^{(0)}.$
Here, we consider two algorithms requiring  $\mathcal{O}(n^2) $ floating point operations and   $\mathcal{O}(n)$ memory:\footnote{We have written the {\tt Matlab} function {\tt gausq2.m},  a modified version of the {\tt fortran} routine {\tt  imtql2.f } from  {\tt eispack} \cite{TQL1,eispack}, to compute the eigenvalues of a symmetric tridiagonal matrix $ T_n$ , given only its main diagonal $\mbox{\tt diag} (T_n)$ and the subdiagonal $ \mbox{\tt diag} (T_n,-1)$: {\tt [$x$]=gausq2($\mbox{\tt diag} (T_n),\; \mbox{\tt diag} (T_n,-1)$).}}
\begin{enumerate}
\item
 {\tt $\hat{\boldsymbol{x}}^{(0)} $=gausq2($\mbox{\tt diag}(\hat{H}_n),\;\mbox{\tt diag} (\hat{H}_n,-1)$)},
 %$ \hat{\boldsymbol{x}}^{(0)} =\mbox{\tt eig}\left(\mbox{\tt triu}(  \hat{H}_n,-1)\right), $ 
i.e., $\hat{\boldsymbol{x}}^{(0)} $ is the vector of the eigenvalues of the symmetric tridiagonal matrix obtained setting $ \hat{d}_i=0, \; i=2,\ldots, n-1,$ in $\hat{H}_n.  $ Computational complexity: $\mathcal{O}(n^2)$, memory: $\mathcal{O}(n) $;
\item
\begin{enumerate}[label={ \alph*}.]
\item
reduction of  $ \hat{H}_n$ to  a similar  nonsymmetric tridiagonal matrix  $ \hat{T}_n$, by using elementary transformations.
Computational complexity:  $\frac{7}{2} n^2 $ flops;
\item
reduction of  $ \hat{T}_n$ to  a similar  symmetric tridiagonal matrix  $ T_n= \hat{D}_n^{-1} \hat{T}_n \hat{D}_n $,  with $\hat{D}_n$ a diagonal matrix.   Computational complexity: $5 n $ flops;
\item 
%computation of the 
{\tt $\hat{\boldsymbol{x}}^{(0)} $=gausq2($\mbox{\tt diag}({T}_n),\; \mbox{\tt diag}({T}_n,-1)$)}.
%$ \hat{\boldsymbol{x}}^{(0)} =\mbox{\tt eig}\left({T}_n\right).  $ 
Computational complexity: $\mathcal{O}(n^2) $, memory: $\mathcal{O}(n). $
\end{enumerate}
\end{enumerate}
While the first approach yields an approximation of the eigenvalues of $ \hat{H}_n, $  and then of $ H_n, $  the second one
provides  the eigenvalues of  $ \hat{H}_n $ as  $  \hat{\boldsymbol{x}}^{(0)}$, if computed in exact arithmetic.

The analysis  of the implementation of the second approach in floating point arithmetic has been described in \cite{TNP2023}. 
In practice, this approach works in a stable way for all the  MOPs listed in  Section~\ref{sect:MOP} and the 
Ehrlich--Aberth method converges in one iteration.
Therefore, approach  (2) is adopted in order to compute the initial vector $   \hat{\boldsymbol{x}}^{(0)}. $
  We observed that,  even though  $ \mbox{\tt diag}( \hat{H}_n,-2) $ has negative entries, %  in the  subdiagonal %$\hat{\boldsymbol{d}},$
 the entries of the subdiagonal and superdiagonal of $ \hat{T}_n $ are always positive.  Therefore, the similar symmetric matrix $ T_n $ can be computed without requiring complex arithmetic.

The {\tt Matlab} function {\tt tridEHbackwardV.m},  implementing the reduction of  $ \hat{H}_n$ into  a similar  nonsymmetric tridiagonal matrix  $ \hat{T}_n$ (step (2) (a)), is displayed in Table~\ref{tab:trid} of the Appendix.
\\
The {\tt Matlab} function {\tt DScaleSV2.m},  implementing the reduction of  $ \hat{T}_n$ into  a similar  symmetric tridiagonal matrix  $ {T}_n$ (step (2) (a)), is a simplification of the function {\tt DScaleSV2.m} and it is displayed in Table~\ref{tab:Alg1.0} of the Appendix.
\\          %%%%%%%%%%%%%%%%%%%%%%%%%
At iteration $ \ell+1, $ we consider  
$$\hat{x}_j^{(\ell+1)} \quad \mbox{\rm and}\quad 
\hat{\boldsymbol{v}}^{(j)}= \left[\begin{array}{@{\hspace{.4mm}}c@{\hspace{.4mm}}c@{\hspace{.4mm}}c@{\hspace{.4mm}}c@{\hspace{.4mm}}c@{\hspace{.4mm}}c@{\hspace{.4mm}}} 
\Pt_0(x), & \Pt_1(x),& \Pt_2(x),&   \cdots, &  \Pt_{n-2}(x),&  \Pt_{n-1}(x) 
\end{array}\right]^T,
%\hat{\boldsymbol{v}}^{(j)}=\frac{\displaystyle \left[\begin{array}{@{\hspace{.4mm}}c@{\hspace{.4mm}}c@{\hspace{.4mm}}c@{\hspace{.4mm}}c@{\hspace{.4mm}}c@{\hspace{.4mm}}c@{\hspace{.4mm}}} 
%\Pt_0(x), & \Pt_1(x),& \Pt_2(x),&   \cdots, &  \Pt_{n-2}(x),&  \Pt_{n-1}(x) 
%\end{array}\right]^T}
%{\left\|\displaystyle \left[\begin{array}{@{\hspace{.4mm}}c@{\hspace{.4mm}}c@{\hspace{.4mm}}c@{\hspace{.4mm}}c@{\hspace{.4mm}}c@{\hspace{.4mm}}c@{\hspace{.4mm}}} 
%\Pt_0(x), & \Pt_1(x),& \Pt_2(x),&   \cdots, &  \Pt_{n-2}(x),&  \Pt_{n-1}(x) 
%\end{array}\right]^T\right\|_2},
$$ 
respectively,  as an eigenvalue and corresponding %normalized 
right eigenvector  of $ \hat{H}_n$ if 
$$ | \Pt_{n}(\hat{x}_j^{(\ell+1)})| \le tol_1. $$ 
The corresponding  left eigenvector is computed applying a sequence of $ n-1$ Givens rotations 
$$G_i  =
\left[\begin{array}{cccc}
I_{i-1} &      &     &     \\
        & \gamma_i  & \sigma_i &     \\
				& -\sigma_i & \gamma_i &     \\
				&      &     & I_{n-i-1} 
				\end{array}
				\right]
$$ to the left of  $ \hat{H}_n -\hat{x}_j^{(\ell+1)}I_n$, a numerically singular matrix, such that 
$$
G_1^T G_2^T \cdots G_{n-1}^T \left(\tilde{H}_n -\hat{x}_j^{(\ell+1)}I_n\right) 
$$
has the upper diagonal and the $(1,1) $ entry annihilated. Therefore,
$$
\left[\begin{array}{@{}c@{}}
\gamma_1 \\
-\sigma_1 \gamma_2 \\
\sigma_1 \sigma_2 \gamma_3 \\
\vdots \\
(-1)^{n-2}  \displaystyle \prod_{k=1}^{n-2}\sigma_k \gamma_{n-1}\\
(-1)^{n-1} \displaystyle \prod_{k=1}^{n-1}\sigma_{k}
\end{array}
\right],
$$
 i.e., the first column of    $G_{n-1} G_{n-2} \cdots G_2 G_1, $ is the normalized left eigenvector $\hat{\boldsymbol{v}}^{(j)}.$
The {\tt Matlab} function {\tt left\_eigvect.m},  computing the normalized left eigenvector of $\hat{H}_n, $  is displayed in Table~\ref{tab:eigvect}.

%\\
%The {\tt Matlab} implementation reducing   $ \hat{T}_n$ to  a similar  symmetric tridiagonal matrix  $ T_n= \hat{D}_n^{-1} \hat{T}_n \hat{D}_n $ (step (2) (b)), is obtained from   the {\tt Matlab} function {\tt Dscale.m} described in Table~\ref{tab:Alg1.0}.
%% removing line \ref{A1.10} since $ \hat{T}_n$ is a nonsymetric tridiagonal matrix.

The {\tt Matlab}  function {\tt EA\_method.m}, implementing the  Ehrlich--Aberth method, is described in Table~\ref{tab:AlgEA}.

The {\tt Matlab} implementation of the simultaneous Gaussian quadrature rules, called  {\tt GaussMOP.m}, is described in Table~\ref{tab:Alg0}.
%%%%%%%%%%%%%%%%%%%%%%%%%%%%%%%%%%%%%%%%%%%%%%%%%%%%%%%%%%%%%%%%%%%%%%%%%%%%%%%%%%%%%%%%%%%%%%
\begin{remark}
In case  $ r\ge 2, $  the system of monic MOPs satisfies a  $ (r+2)$--term recurrence relation
\begin{equation}\label{eq:MOPr}
x p_n(x)= p_{n+1}(x)+\sum_{j=0}^{r} a_{n,j} p_{n-j}(x), \quad n\ge 0. 
\end{equation}
Writing (\ref{eq:MOPr}) in matrix form, we obtain 
$$
H_n 
\left[\begin{array}{c} 
p_0(x) \\ p_1 (x) \\ \vdots \\  p_{n-1}(x)
\end{array}\right] +
\left[\begin{array}{c} 
0 \\ \vdots \\ 0 \\p_n (x) 
\end{array}\right] =
x
\left[\begin{array}{c} 
p_0(x) \\ p_1 (x) \\ \vdots \\  p_{n-1}(x)
\end{array}\right],
$$
with
$$
H_n=
\left[
\begin{array}{ccccccccc}
a_{0,0} & 1  &    &        &  \\ 
a_{1,1} & a_{1,0} & 1  &     &        &  \\ 
a_{2,2} & a_{2,1} & a_{2,0} & 1  &     &        &  \\ 
\vdots  &\ddots  & \ddots &\ddots &\ddots\\
a_{r,r} & a_{r,r-1} & \cdots  &  a_{r,1} &  a_{r,0} & 1  &     &         \\
        & a_{r+1,r} & a_{r+1,r-1} & \cdots  &  a_{r+1,1} &  a_{r+1,0} & \ddots  &               \\
				&           & \ddots      & \ddots  & \ddots      & \ddots & \ddots      &  1 \ \\
				&           &             & a_{n-1,r} & a_{n-1,r-1} & \cdots  &  a_{n-1,1} &  a_{n-1,0} 
\end{array}
\right].
$$
Hence, similarly  to the case $ r=2, $  the simultaneous Gaussian quadrature rule for $ r> 2 $  can be retrieved from the eigenvalues and corresponding left and right eigenvectors of $ H_n$  \cite{VanAssche2005}.
\\
A sketch of an algorithm for computing   the simultaneous Gaussian quadrature rule for $ r> 2 $ can be summarized in the following steps:
\begin{enumerate}
\item construct the Hessenberg matrix $ H_n$;
\item compute $ \hat{H}_n = S_n^{-1} H_n S_n,$ with $ S_n $ a diagonal matrix such that
$\mbox{\tt triu}( \hat{H}_n,-1) $ is  symmetric;
\item compute the left and right eigendecompsition of $\hat{H}_n:$ $  \hat{U}_n^T \hat{H}_n= \hat{\Lambda}_n\hat{U}_n^T $ and $   \hat{H}_n\hat{V}_n=\hat{V}_n \hat{\Lambda}_n;$
\item retrieve the nodes and the weights of the simultaneous Gaussian quadrature rule from  $ \hat{\Lambda}_n, \hat{U}_n, \hat{V}_n $ and $ S_n. $
\end{enumerate}
\end{remark}

\section{Numerical Tests}
\label{sect:NE}
In this section we report some numerical tests %a numerical example 
performed in   {\tt Matlab R2022a}, with machine precision  $  \varepsilon \approx 2.22\times 10^{-16}.$ For all considered MOPs, the results of
{\tt GaussMOP.m}
 %the  proposed simultaneous Gaussian quadrature rules
 with $ n $ nodes, for $ n=10,20,\ldots, 90, 100, $
are compared to those obtained by using   the {\tt Matlab} function {\tt integral.m} % and the  {\tt Matlab}  variable precision  function {\tt vpaintegral.m}.
%The former is applied  in two ways: 
 %without setting the absolute error,  denoted by $ I_1, $ and   imposing the   absolute error equal to $ 10^{-13} $ , denoted by  $I_2.$  
%%The latter is used setting the precision to   $ 10^{-20}$ and converted to double precision by means of the {\tt Matlab} function {\tt double.m}.
%The function {\tt integral.m} is
 applied  in two ways: 
 without setting the absolute error,  denoted by $ I_1, $ and   imposing the   absolute error equal to $ 10^{-13} $ , denoted by  $I_2.$ 
%The function {\tt vpaintegral.m} is used setting the precision to   $ 10^{-20}$ and converted to double precision by means of the {\tt Matlab} function {\tt double.m}.
The same integrals were also computed by   the {\tt Mathematica 13.0} function {\tt Integrate}, setting the precision to $ 80 $ digits. These values, rounded to floating point numbers by means of the {\tt matlab} function {\tt double.m}, % and but  we observed that some values were different from those obtained by  {\tt vpaintegral.m}. Here, the results obtained  by {\tt vpaintegral.m}
 are assumed  to be the exact ones and compared with the results of  {\tt GaussMOP.m}, $ I_1, $ and $ I_2.$%  since the sequence of the numerical approximations of the integrals yielded by {\tt GaussMOP.m} converges to those ones, as $ n $ increases. 
\begin{example}\label{ex:Num1} In this example, {\tt GaussMOP.m}
 is used to simultaneously compute the integrals
$$\int_{\Delta^{(i)}}f(x) w^{(1)}(x)dx,\quad \int_{\Delta^{(i)}}f(x) w^{(2)}(x)dx,  $$
where  the integrand function is  $ f(x)= x e^{-x},  $ the same function considered in \cite{VanAssche2023}, and the input parameters for all MOPs are those displayed in Table~\ref{tab:MOP}.

The absolute errors of the integrals   computed by {\tt GaussMOP.m} are displayed  in Table~\ref{tab:ExNum1.0} and Table~\ref{tab:ExNum1.1} for  $ w^{(1)}$ and  $w^{(2)}$, respectively. Moreover,  in the last two rows of these tables, the results obtained applying the {\tt Matlab} function {\tt integral.m} %without setting the absolute error (last but one row) and with  $ 10^{-13} $  absolute error, denoted respectively by 
($I_1$ and $I_2 $),  are reported.

Observe that, in some  cases, the functions $I_1$ and $I_2 $ %{\tt integral}
 yield  {\tt NaN} as results, displaying the message ``{\tt Warning: Inf or NaN value encountered}''. 
\begin{table*}
  \caption{Absolute errors of the integrals, with weight $ w^{(1)}$,  computed by {\tt GaussMOP.m},  for $n=10, 20,\ldots, 100, $ and by $ I_1$ and $I_2$.}
		\centering
  \label{tab:ExNum1.0}
		\footnotesize
\begin{tabular}{@{}|@{\hspace{.4mm}}r@{\hspace{.4mm}}|@{\hspace{.4mm}}c@{\hspace{.4mm}}|@{\hspace{.4mm}}c@{\hspace{.4mm}}|@{\hspace{.4mm}}c@{\hspace{.4mm}}|@{\hspace{.4mm}}c@{\hspace{.4mm}}|@{\hspace{.4mm}}c@{\hspace{.4mm}}|@{\hspace{.4mm}}c@{\hspace{.4mm}}|@{\hspace{.4mm}}c@{\hspace{.4mm}}|@{\hspace{.4mm}}c@{\hspace{.4mm}}|@{\hspace{.4mm}}c@{\hspace{.4mm}}|@{}}%{|@{\hspace{.1cm}}l@{\hspace{.1cm}}|c|c|c|c|@{}c@{}|}
\hline
\multicolumn{10}{|c|}{$\int_{\Delta^{(i)}}f(x) w^{(1)}(x)dx, \quad f(x)= xe^{-x}$} \\
\hline
$n$ &$\mbox{\bf MOP}_1$ &$\mbox{\bf MOP}_2$&$\mbox{\bf MOP}_3$ &$\mbox{\bf MOP}_4$ &$\mbox{\bf MOP}_5$ &$\mbox{\bf MOP}_6$ &$\mbox{\bf MOP}_7$ &$\mbox{\bf MOP}_8$&$\mbox{\bf MOP}_9$ \\ 
\hline
 $10$&            $ 0        $&$     3.23 (-9)$&$    7.17 (-4)$&$    5.23 (-13)$&$    1.23 (-10)$&$    3.88 (-4)$&$    3.75 (-5)$&$    4.16 (-17)$&$    5.79 (-10)$\\
 $20$&$   1.99 (-15)$&$    2.10 (-15)$&$    4.59 (-8)$&$    1.34 (-13)$&$    2.22 (-15)$&$    6.86 (-6)$&$    1.47 (-10)$&$    1.94 (-16)$&$    4.99 (-16)$\\ 
 $30$&$   1.77 (-15)$&$    2.05 (-15)$&$    2.18 (-12)$&$    5.45 (-14)$&$    5.61 (-14)$&$    7.47 (-7)$&$    3.33 (-15)$&$    1.80 (-16)$&$    5.55 (-17)$\\
 $40$&$   5.10 (-15)$&$    1.24 (-14)$&$    2.54 (-14)$&$    2.92 (-13)$&$    3.78 (-13)$&$    5.97 (-8)$&$    1.99 (-15)$&$    1.80 (-16)$&$    3.88 (-16)$\\ 
 $50$&$   4.32 (-15)$&$    2.58 (-14)$&$    6.98 (-13)$&$    3.35 (-14)$&$    2.49 (-13)$&$    6.07 (-10)$&$    4.21 (-15)$&$    6.93 (-17)$&$             0 $ \\
 $60$&$   7.32 (-15)$&$    1.66 (-16)$&$    4.18 (-14)$&$    5.99 (-15)$&$    4.19 (-13)$&$    5.86 (-10)$&$    7.32 (-15)$&$    4.16 (-17)$&$    9.21 (-15)$\\ 
 $70$&$   7.21 (-15)$&$    1.08 (-14)$&$    1.01 (-14)$&$    6.70 (-13)$&$    9.43 (-13)$&$    3.50 (-11)$&$    7.32 (-15)$&$    3.46 (-16)$&$    3.49 (-15)$\\
 $80$&$   1.11 (-16)$&$    9.38 (-15)$&$    8.10 (-13)$&$    2.25 (-13)$&$    9.39 (-13)$&$    1.12 (-11)$&$    4.88 (-15)$&$    5.13 (-16)$&$    5.55 (-17)$\\
 $90$&$   1.22 (-15)$&$    3.36 (-14)$&$    2.44 (-14)$&$    4.53 (-13)$&$    9.39 (-13)$&$    8.46 (-13)$&$   2.37 (-14)$&$    1.38 (-17)$&$    3.74 (-14)$\\ 
 $100$&$  2.33 (-15)$&$    3.68 (-14)$&$    1.27 (-13)$&$    4.10 (-13)$&$    1.27 (-13)$&$    1.18 (-12)$&$    2.66 (-15)$&$             0           $&$  2.83 (-14)$ \\ 
\hline
\hline
$ I_2$&$ 2.17 (-13)$&$   1.11 (-16)$&$            0  $&$   \mbox{\tt  NaN}   $&$  \mbox {\tt  NaN}   $&$     0   $&$   \mbox{\tt  NaN}  $&$  5.96 (-13)$&$   5.55 (-17)$\\
\hline
$I_1$ & $8.74 (-10)$&$      0 $&$  1.11 (-16)$&$  \mbox{\tt  NaN}   $&$   \mbox{\tt  NaN}   $&$   2.77 (-17)$&$   \mbox {\tt  NaN}   $&$  1.38 (-17)$&$          0$ \\
\hline
\end{tabular}\end{table*}
\begin{table*}
  \caption{Absolute errors of the integrals, with weight $ w^{(2)}$,   computed by {\tt GaussMOP.m},  for $n=10, 20,\ldots, 100, $ and by $ I_1$ and $I_2$.}
	\centering
  \label{tab:ExNum1.1}
		\footnotesize
\begin{tabular}{@{}|@{\hspace{.4mm}}r@{\hspace{.4mm}}|@{\hspace{.4mm}}c@{\hspace{.4mm}}|@{\hspace{.4mm}}c@{\hspace{.4mm}}|@{\hspace{.4mm}}c@{\hspace{.4mm}}|@{\hspace{.4mm}}c@{\hspace{.4mm}}|@{\hspace{.4mm}}c@{\hspace{.4mm}}|@{\hspace{.4mm}}c@{\hspace{.4mm}}|@{\hspace{.4mm}}c@{\hspace{.4mm}}|@{\hspace{.4mm}}c@{\hspace{.4mm}}|@{\hspace{.4mm}}c@{\hspace{.4mm}}|@{}}%{|@{\hspace{.1cm}}l@{\hspace{.1cm}}|c|c|c|c|@{}c@{}|}
\hline
\multicolumn{10}{|c|}{$\int_{\Delta^{(i)}}f(x) w^{(2)}(x)dx, \quad f(x)= xe^{-x}$} \\
\hline
$n$ &$\mbox{\bf MOP}_1$ &$\mbox{\bf MOP}_2$&$\mbox{\bf MOP}_3$ &$\mbox{\bf MOP}_4$ &$\mbox{\bf MOP}_5$ &$\mbox{\bf MOP}_6$ &$\mbox{\bf MOP}_7$ &$\mbox{\bf MOP}_8$&$\mbox{\bf MOP}_9$ \\ 
\hline
  $ 10 $&$   1.11 (-16)$&$    2.35 (-8)$&$     2.33 (-3)$&$    5.32 (-15)$&$     3.103 (-11)$&$   1.97 (-3)$&$    1.21 (-3)$&$    1.11 (-16)$&$    2.64 (-10)$\\
  $ 20 $&$   1.99 (-15)$&$    1.05 (-15)$&$    7.19 (-7)$&$    1.37 (-13)$&$     1.93 (-14)$&$    4.61 (-5)$&$    3.90 (-9)$&$    2.22 (-16)$&$    4.44 (-16)$\\
  $ 30 $&$   2.10 (-15)$&$    1.11 (-15)$&$    1.64 (-10)$&$    8.31 (-14)$&$    3.41 (-14)$&$    6.85 (-7)$&$    2.22 (-16)$&$    3.60 (-16)$&$             0  $ \\
  $ 40 $&$   4.99 (-15)$&$    5.19 (-15)$&$    5.72 (-14)$&$    7.36 (-13)$&$    2.76 (-14)$&$    1.36 (-7)$&$    4.21 (-15)$&$    3.05 (-16)$&$    2.22 (-16)$\\
  $ 50 $&$   4.32 (-15)$&$    1.06 (-14)$&$    6.81 (-13)$&$    6.02 (-13)$&$    9.65 (-15)$&$    1.92 (-8)$&$    1.37 (-14)$&$    1.94 (-16)$&$    1.11 (-16)$\\ 
  $ 60 $&$   7.54 (-15)$&$    4.24 (-15)$&$    4.28 (-14)$&$    3.63 (-13)$&$    2.35 (-15)$&$    1.58 (-10)$&$    1.86 (-14)$&$    8.32 (-17)$&$   9.38 (-15)$\\ 
  $ 70 $&$   7.66 (-15)$&$    2.35 (-15)$&$    6.99 (-15)$&$    1.54 (-13)$&$    4.79 (-14)$&$    3.29 (-10)$&$    8.21 (-15)$&$    2.22 (-16)$&$    3.33 (-15)$\\ 
  $ 80 $&$            0         $&$    1.12 (-14)$&$    7.90 (-13)$&$    3.07 (-13)$&$    1.92 (-14)$&$    2.06 (-12)$&$    4.66 (-15)$&$    4.44 (-16)$&$    7.21 (-16)$\\ 
  $ 90 $&$   1.44 (-15)$&$    6.77 (-15)$&$    2.22 (-14)$&$    1.65 (-13)$&$    2.74 (-14)$&$    9.16 (-12)$&$    3.01 (-14)$&$    2.77 (-17)$&$   3.98 (-14)$\\
  $100 $&$   2.55 (-15)$&$    1.29 (-14)$&$    1.23 (-13)$&$    4.26 (-13)$&$    1.82 (-13)$&$    1.35 (-12)$&$    1.99 (-15)$&$    1.66 (-16)$&$    2.80 (-14)$\\ 
\hline
\hline
$I_2$ &$ 5.58 (-14)$&$             0 $&$             0 $&$           \mbox{\tt NaN}$&$   5.55 (-17)$&$   5.55 (-17)$&$           \mbox{\tt NaN}$&$   2.77 (-17)$&$   5.55 (-17)$\\
\hline
$I_1 $&$1.06 (-13)$&$   5.55 (-17)$&$             0 $&$           \mbox{\tt NaN}$&$             0 $&$             0 $&$           \mbox{\tt NaN}$&$   2.77 (-17)$&$             0 $\\
\hline
\end{tabular}\end{table*}
%%%%%%%%%%

\normalsize
%The results yielded by the {\tt Matlab} function {\tt integral} are the same of those of {\tt vpaintegral} are reported in Table

%We  observe that  while {\tt GaussMOP} has

\begin{table*}
  \caption{Executuion times in seconds required for computing  both  integrals, with weights $ w^{(1)}$ and  $ w^{(2)},$  by {\tt GaussMOP.m},  for $n=10,20,\ldots, 100, $ and by $ I_1$ and $I_2$.}
		\centering
  \label{tab:ExNum1.t}
		\footnotesize
\begin{tabular}{@{}|@{\hspace{.4mm}}r@{\hspace{.4mm}}|@{\hspace{.4mm}}c@{\hspace{.4mm}}|@{\hspace{.4mm}}c@{\hspace{.4mm}}|@{\hspace{.4mm}}c@{\hspace{.4mm}}|@{\hspace{.4mm}}c@{\hspace{.4mm}}|@{\hspace{.4mm}}c@{\hspace{.4mm}}|@{\hspace{.4mm}}c@{\hspace{.4mm}}|@{\hspace{.4mm}}c@{\hspace{.4mm}}|@{\hspace{.4mm}}c@{\hspace{.4mm}}|@{\hspace{.4mm}}c@{\hspace{.4mm}}|@{}}%{|@{\hspace{.1cm}}l@{\hspace{.1cm}}|c|c|c|c|@{}c@{}|}
\hline
\multicolumn{10}{|c|}{Execution  time in seconds} \\
\hline
$n$ &$\mbox{\bf MOP}_1$ &$\mbox{\bf MOP}_2$&$\mbox{\bf MOP}_3$ &$\mbox{\bf MOP}_4$ &$\mbox{\bf MOP}_5$ &$\mbox{\bf MOP}_6$ &$\mbox{\bf MOP}_7$ &$\mbox{\bf MOP}_8$&$\mbox{\bf MOP}_9$ \\ 
\hline
 $ 10 $&$   5.51 (-2)$&$ 1.00 (-2)$&$ 6.21 (-3)$&$ 4.47 (-3)$&$ 1.07 (-2)$&$ 4.11 (-3)$&$ 2.95 (-3)$&$ 5.77 (-3)$&$ 3.37 (-2)$\\
 $ 20 $&$   6.27 (-3)$&$ 2.00 (-3)$&$ 2.56 (-3)$&$ 2.01 (-3)$&$ 3.49 (-3)$&$ 1.55 (-3)$&$ 1.44 (-3)$&$ 2.38 (-3)$&$ 2.85 (-3)$\\
 $ 30 $&$   1.94 (-3)$&$ 1.93 (-3)$&$ 1.83 (-3)$&$ 1.89 (-3)$&$ 2.33 (-3)$&$ 1.85 (-3)$&$ 1.94 (-3)$&$ 2.00 (-3)$&$ 2.15 (-3)$\\
 $ 40 $&$   3.00 (-3)$&$ 2.97 (-3)$&$ 2.93 (-3)$&$ 3.03 (-3)$&$ 2.89 (-3)$&$ 2.91 (-3)$&$ 2.83 (-3)$&$ 2.99 (-3)$&$ 3.00 (-3)$\\
 $ 50 $&$   6.38 (-3)$&$ 5.95 (-3)$&$ 6.01 (-3)$&$ 6.14 (-3)$&$ 7.02 (-3)$&$ 6.98 (-3)$&$ 6.32 (-3)$&$ 6.65 (-3)$&$ 7.29 (-3)$\\
 $ 60 $&$   5.78 (-3)$&$ 5.45 (-3)$&$ 5.23 (-3)$&$ 5.53 (-3)$&$ 5.16 (-3)$&$ 4.89 (-3)$&$ 4.96 (-3)$&$ 5.06 (-3)$&$ 5.69 (-3)$\\
 $ 70 $&$   6.71 (-3)$&$ 6.60 (-3)$&$ 6.61 (-3)$&$ 6.72 (-3)$&$ 6.58 (-3)$&$ 6.52 (-3)$&$ 6.52 (-3)$&$ 6.63 (-3)$&$ 7.44 (-3)$\\
 $ 80 $&$   8.68 (-3)$&$ 8.31 (-3)$&$ 8.40 (-3)$&$ 8.34 (-3)$&$ 8.34 (-3)$&$ 8.31 (-3)$&$ 8.54 (-3)$&$ 8.55 (-3)$&$ 9.23 (-3)$\\
 $ 90 $&$   1.08 (-2)$&$ 1.07 (-2)$&$ 1.08 (-2)$&$ 1.06 (-2)$&$ 1.06 (-2)$&$ 1.04 (-2)$&$ 1.05 (-2)$&$ 1.07 (-2)$&$ 1.06 (-2)$\\
 $ 100 $&$  1.31 (-2)$&$ 1.29 (-2)$&$ 1.28 (-2)$&$ 1.27 (-2)$&$ 1.28 (-2)$&$ 1.26 (-2)$&$ 1.27 (-2)$&$ 1.29 (-2)$&$ 1.29 (-2)$\\
\hline
\hline 
$ I_1 $&$  2.68 (-2)$&$ 1.10 (-2)$&$ 5.64 (-3)$&$ 5.12 (-3)$&$ 5.65 (-3)$&$ 3.28 (-3)$&$ 3.96 (-3)$&$ 2.49 (-1)$&$ 2.61 (0)$\\
\hline
 $ I_2 $&$  3.11 (-3)$&$ 2.31 (-3)$&$ 9.91 (-4)$&$ 2.26 (-3)$&$ 1.79 (-3)$&$ 7.18 (-4)$&$ 1.60 (-3)$&$ 9.82 (-2)$&$ 1.39 (0)$\\
\hline
\end{tabular}\end{table*}

In Table~\ref{tab:ExNum1.t} the execution times (in seconds) required by   {\tt GaussMOP.m}, for different values of $ n$, and by the {\tt Matlab} functions $ I_1$ and $I_2 $, % {\tt integral}
applied for computing both  integrals with  weights $w^{(1)} $ and $ w^{(2)}$,   are reported. Although the code {\tt GaussMOP.m} is interpreted by {\tt Matlab},  {\tt GaussMOP.m}, $I_1$ and $I_2$, exhibit comparable execution times,
except for $\mbox{\bf MOP}_8$ and $\mbox{\bf MOP}_9$, for which  {\tt GaussMOP.m} is significantly faster than $ I_1$ and $I_2$.
 %for the first $ 7 $ MOPs, while the  proposed method is much faster than $ I_1$ and $I_2$  for $\mbox{\bf MOP}_8$ and $\mbox{\bf MOP}_9$.  

\end{example}

\section{Conclusions}
\label{sect:conc}
A {\tt Matlab} package called {\tt GaussMOP.m} is proposed for computing simultaneous Gaussian quadrature rules associated with different kinds of  MOPs. The nodes and weights of such rules are retrieved from the   eigendecomposition of a banded lower Hessenberg matrix, which turns out to be an  ill--conditioned eigenvalue problem.
Making use of a novel balancing procedure, the eigenvalue condition of the latter Hessenberg matrix is drastically reduced. 
Moreover, a variant of 
 the Aberth--Ehrlich method  is used to compute  the eigenvalues and  associated  left and right eigenvectors with $ {\mathcal O}(n)$ memory and $ {\mathcal O}(n^2)$  computational complexity.

{\tt GaussMOP.m} was applied for simultaneously computing integrals with two different weights associated with the considered MOPs, and its performance was compared, in terms of accuracy, robustness and execution time, to that of the {\tt Matlab} intrinsic  function {\tt integral.m}.  The values obtained by computing the latter integrals by the  {\tt Mathematica} function {\tt Integrate}, requiring  a precision of  $80 $ digits, were considered as the exact ones.

The numerical tests show the reliability of the proposed numerical method.

%We observed that, in some cases, the function {\tt integral.m} fails to compute the considered integrals and its execution times are significantly higher than those required by {\tt GaussMOP.m}. 
%Concerning the comparison between {\tt GaussMOP.m} and the  {Mathematica} function {\tt NIntegrate}, the numerical tests highlighted 
%that the numerical approximations yielded by {\tt NIntegrate} are not computed to full double precision accuracy.

\section*{Acknowledgements}

%\begin{acks}
Teresa Laudadio and Nicola Mastronardi are members of the Gruppo Nazionale Calcolo Scientifico-Istituto Nazionale di Alta Matematica (GNCS-INdAM).
The work of Nicola Mastronardi  was partly supported by MIUR, 
PROGETTO DI RICERCA DI
RILEVANTE INTERESSE NAZIONALE (PRIN)
20227PCCKZ
``Low--rank Structures and Numerical Methods in Matrix and Tensor
Computations and their Application'',
Universit\`a degli Studi di BOLOGNA CUP
J53D23003620006. 
The work of Walter Van Assche was supported by FWO grant G0C9819N.
The work of Paul Van Dooren  was partly supported by Consiglio Nazionale delle Ricerche, under the Short Term Mobility program. 
%\end{acks}
%% The Appendices part is started with the command \appendix;
%% appendix sections are then done as normal sections
%% \appendix

%% \section{}
%% \label{}

%% If you have bibdatabase file and want bibtex to generate the
%% bibitems, please use
 \bibliographystyle{elsarticle-num} 
 \bibliography{MOPbib1}

\begin{thebibliography}{10}
\expandafter\ifx\csname url\endcsname\relax
  \def\url#1{\texttt{#1}}\fi
\expandafter\ifx\csname urlprefix\endcsname\relax\def\urlprefix{URL }\fi
\expandafter\ifx\csname href\endcsname\relax
  \def\href#1#2{#2} \def\path#1{#1}\fi

\bibitem{Kuijlaars}
A.~B.~J. Kuijlaars, Multiple orthogonal polynomial ensembles, in: J.~Arves\'u,
  F.~Marcell\'an, A.~Mart\'{\i}nez-Finkelshtein (Eds.), in ``Recent trends in
  orthogonal polynomials and approximation theory'', Vol. 507 of Contemp.
  Math., Springer, Berlin, 2010, pp. 155--176.

\bibitem{Sokal}
A.~Sokal, Multiple orthogonal polynomials, $d$-orthogonal polynomials,
  production matrices, and branched continued fractions, Trans. Amer. Math.
  Soc., Series B 11 (2024) 762--797.

\bibitem{Manas}
A.~Branquinho, J.~E.~F. D\'{\i}az, A.~Foulqui\'e-Moreno, M.~Ma\~nas, Bidiagonal
  factorization of the recurrence matrix for the {H}ahn multiple orthogonal
  polynomials (2023).
\newblock \href {http://arxiv.org/abs/2308.01288} {\path{arXiv:2308.01288}}.

\bibitem{Borges94}
C.~F. Borges, On a class of {G}auss--like quadrature rules, Numer. Math. 67
  (1994) 271--288.

\bibitem{VanAssche2023}
W.~Van~Assche, A {G}olub--{W}elsch version for simultaneous {G}aussian
  quadrature, Numerical Algorithms (2024).
\newblock \href {https://doi.org/10.1007/s11075-024-01767-2}
  {\path{doi:10.1007/s11075-024-01767-2}}.

\bibitem{Gautschi1}
W.~Gautschi, G.~V. Milovanovi\'c, Orthogonal polynomials relative to weight
  functions of {P}rudnikov type, Numer. Algorithms 90~(1) (2022) 263--270.

\bibitem{Gautschi2}
W.~Gautschi, Another look at polynomials orthogonal relative to exponential
  integral weight functions, Numer. Algorithms 91~(4) (2022) 1547--1557.

\bibitem{VanAssche2005}
J.~Coussement, W.~Van~Assche, Gaussian quadrature for multiple orthogonal
  polynomials, Journal of Computational and Applied Mathematics 178 (2005)
  131--145.

\bibitem{VanAssche2001}
W.~Van~Assche, E.~Coussement, Some classical multiple orthogonal polynomials,
  J. Comput. Appl. Math. 127 (2001) 317--347.

\bibitem{ismail05}
M.~E.~H. Ismail, Classical and Quantum Orthogonal Polynomials in One Variable,
  5th Edition, Encyclopedia of Mathematics and its Applications, vol. 98.
  Cambridge University Press, London, 2005.

\bibitem{GolubW69}
G.~H. Golub, J.~H. Welsch, Calculation of {G}auss quadrature rules, Mathematics
  of Computation 23 (1969) 221--230.

\bibitem{LubAsc}
D.~Lubinsky, W.~Van~Assche, Simultaneous gaussian quadrature for angelesco
  systems, Jaen J. Approx. 8(2) (2016), 113–149 8~(2) (2016) 113--149.

\bibitem{TNP2023}
T.~Laudadio, N.~Mastronardi, P.~Van~Dooren, Computational aspects of
  simultaneous {G}aussian quadrature, Numerical Algorithms (2024).
\newblock \href {https://doi.org/10.1007/s11075-024-01785-0}
  {\path{doi:10.1007/s11075-024-01785-0}}.

\bibitem{abr64}
M.~Abramowitz, I.~A. Stegun, Handbook of Mathematical Functions, 5th Edition,
  Dover, New York, 1964.

\bibitem{Pine}
L.~R. Pi{\~{n}}eiro, On simultaneous approximations for a collection of
  {M}arkov functions, Moscow Univ. Math. Bull. 42~(2) (1987) 52--55.

\bibitem{Soro1}
V.~N. Sorokin, Generalization of classical orthogonal polynomials and
  convergence of simultaneous {P}ad\'e approximants, Trudy Sem. Im. I. G.
  Petrovsk. 11 (1986) 125--165, translated in J. Soviet Math. 45 (1989)
  1461--1499.

\bibitem{Soro3}
V.~N. Sorokin, Simultaneous {P}ad\'e approximation for functions of {S}tieltjes
  type, Siber. Mat. Zh. 31~(6) (1990) 128--137, translated in Siber. Math. J.
  31(5) (1990) 809--817.

\bibitem{Niki1}
E.~M. Nikishin, V.~N. Sorokin, Rational Approximations and Orthogonality,
  Vol.~92 of Translations of Mathematical Monographs, Amer. Math. Soc.,
  Providence, RI, 1991.

\bibitem{ChDo2000}
Y.~Ben~Cheikh, K.~Douak, On two-orthogonal polynomials related to the
  {B}ateman's ${J}_n^{ u,v}$--function, Methods Appl. Anal. 7~(4) (2000)
  641--662.

\bibitem{VanAssche2000}
W.~Van~Assche, S.~Yakubovich, Multiple orthogonal polynomials associated with
  {M}acdonald functions, Integral Transforms and Special Functions 9~(3) (2000)
  229--244.

\bibitem{VanAssche2003}
E.~Coussement, W.~Van~Assche, Multiple orthogonal polynomials associated with
  the modified {B}essel functions of the first kind, Constr. Approx. 19 (2003)
  237--263.
\newblock \href {https://doi.org/10.1007/s00365-002-0499-9}
  {\path{doi:10.1007/s00365-002-0499-9}}.

\bibitem{Lima22}
H.~Lima, A.~Loureiro, Multiple orthogonal polynomials with respect to {G}auss'
  hypergeometric function, Studies in Applied Mathematics 148~(1) (2022)
  154--185.

\bibitem{Lima20}
H.~Lima, A.~Loureiro, Multiple orthogonal polynomials associated with confluent
  hypergeometric functions, Journal of Approximation Theory 260 (2020) 105484.

\bibitem{Aberth}
O.~Aberth, Iteration methods for finding all zeros of a polynomial
  simultaneously, Mathematics of Computation 27 (1973) 339--344.

\bibitem{Bini}
D.~A. Bini, L.~Gemignani, F.~Tisseur, The {E}hrlich--{A}berth method for the
  nonsymmetric tridiagonal eigenvalue problem, SIAM J. Matrix Anal. Appl. 27
  (2005) 153--175.

\bibitem{Ehrlich1}
L.~W. Ehrlich, A modified {N}ewton method for polynomials, Communications of
  the ACM 10 (1967) 107--108.

\bibitem{TQL1}
H.~Bowdler, R.~Martin, C.~Reinsch, J.~Wilkinson, The {QR} and {QL} algorithms
  for symmetric matrices: {TQL1} and {TQL2}, Numerische Mathematik 11~(4)
  (1968) 293--306.

\bibitem{eispack}
B.~S. Garbow, J.~M. Boyle, J.~J. Dongarra, C.~B. Moler, Matrix Eigensystem
  Routines - {EISPACK} Guide Extension, Vol.~51 of Lecture Notes in Computer
  Science, Springer, 1977.

\end{thebibliography}

%% else use the following coding to input the bibitems directly in the
%% TeX file.

%\begin{thebibliography}{00}

%% \bibitem{label}
%% Text of bibliographic item

%\bibitem{}

\newpage
%\end{thebibliography}
\section{Appendix: {\tt Matlab} codes} \label{sect:append}
The {\tt Matlab } functions implementing the simultaneus Gaussian quadrature rules for MOPs are displayed  in the following Tables. The whole package can be downloaded from \\
{\tt https://users.ba.cnr.it/iac/irmanm21/MOP/}

\begin{table}[h]
  \caption{Matlab function   {\tt GaussMOP.m}}
  \label{tab:Alg0}
%\begin{enumerate}[label={\bf A.\arabic*}]
  {\tt function[x,w1,w2,ier]=GaussMOP(b,c,d,n,F)}  \\
  {\tt \% compute the nodes and weights of the simultaneous }  \\
  {\tt \% Gaussian quadrature rule associated with MOPs}  \\
	 {\tt \%  Input:}\\
 {\tt   b=diag(}$ {H}_{n}${\tt );}\\\
 {\tt    c =diag(}$ {H}_{n}${\tt );}\\\
 {\tt    d=diag(}$ {H}_{n},-2${\tt );}\\
 {\tt    n : size of} $ \hat{H}_n$\\
 {\tt    F:} $ 2\times 2 $ {\tt matrix used for computing the weights}\\
 {\tt \% Output:}\\
 {\tt \%  x, w1, w2: nodes and weights of the simultaneous Gauss rule}\\
{\tt \%  ier: ier is set to zero for normal return, ier is set to j if }\\
{\tt \% the j-th eigenvalue has not been determined after 30 iterations.}\\
 % {\tt \% compute the entries of the scaled matrix}  \\
	{\tt a=ones(1,n+1);}\\
	{\tt \%balancing the Hessenberg matrix }\\
  {\tt [as,bs,cs,ds,S] = DScaleS2(a,b,c,d,n+1);}  \\
  {\tt \% transformation of the Hessenberg matrix into a similar nonsymmetric}  \\
	 {\tt \% tridiagonal matrix  by means of elementary transformations }  \\
  {\tt [GEL,as1,bs1,cs1] =  tridEHbackwardV(as,bs,cs,ds,n);}  \\
  {\tt \% transformation of the nonsymmetric tridiagonal matrix}\\
	{\tt \% into a similar symmetric tridiagonal matrix}  \\
  {\tt  [bs2,cs2] = DScaleSV2(as1,bs1,cs1,n+1);}  \\
  {\tt \% computation of the eigenvalues of the symmetric tridiagonal matrix}  \\
  {\tt [x,e1,z1,ierr]= gausq2(n, bs2(1:n), cs2(2:n));}  \\
  {\tt x=sort(x);}  \\
  {\tt \% computation of the nodes and weights}  \\
  {\tt \% by means of the Ehrlich-Aberth Method}  \\
  {\tt tol=n\^{}2*eps; }  \\
  {\tt [x,w1,w2,ier] = EA\_method(as,bs,cs,ds,x,tol,F,S,n);}  \\
 {\tt end }
%\end{enumerate}
\end{table}
%%%%%%%%%%%%%%%%%%%%%%%%%%%%%%%%%%%%%%%%%%%%%%%%%%%%%%%%%%%%%%%%%
\begin{table}[h]
  \caption{Matlab function   {\tt DScaleS2.m}}
  \label{tab:Alg1}
%\begin{enumerate}[label={\bf B.\arabic*}]
 \begin{tabbing}
 {\tt func}\={\tt tion}\={\tt  [a,b,c,d,s] = DScaleS2(a,b,c,d,n) } \\
{\tt \% Diagonal scaling of the  banded Hessenberg matrix} $H_n$\\
{\tt \%   has as input a lower Hessenberg matrix H with bandwidth 4 }\\ 
{\tt \%   and returns a scaled version} $ \hat{H}_n = S_n^{-1}{H}_n S_n $\\ 
{\tt \%   where} $S_n$ {\tt  is a diagonal matrix}\\ 
{\tt \% Input:}\\
{\tt \%   a =diag(}$ {H}_n${\tt ,1);}\\
{\tt \%   b=diag(}$ {H}_n${\tt );}\\
{\tt \%   c=diag(}$ {H}_n${\tt ,-1);}\\
{\tt \%   d=diag}$ {H}_n${\tt ,-2);}\\
{\tt \%   n : size of} $ {H}_n$;\\
{\tt \%   Output:}\\
{\tt \%       a=diag(}$ \hat{H}_n${\tt ,1);}\\
{\tt \%       b=diag(}$ \hat{H}_n${\tt );}\\
{\tt \%       c=diag(}$ \hat{H}_n${\tt ,-1);}\\
{\tt \%       d=diag(}$ \hat{H}_n${\tt ,-2);}\\
{\tt \%       s=diag(}$ {S}_n${\tt,1);}\\
 {\tt s(1)=1;s(2)=sqrt(c(2)/a(1));  }\\
 {\tt t1=sqrt(a(1)/c(2));  }\\
 {\tt a(1)=a(1)/t1; \;c(2)=a(1);   } \\
% {\tt c(2)=a(1);  }\label{A1.5} 
 {\tt for i=2:n-1  } \\
\>{\tt t2=sqrt(a(i)/c(i+1));\; d(i+1)=d(i+1)*t1*t2;    }\\
% \>{\tt }\\
 \>{\tt a(i)=sqrt(a(i)*c(i+1));\;  c(i+1)=a(i); t1=t2;     } \\
% {\tt   } \\
% {\tt \;\;\; d(i+1)=d(i+1)*s(i-1)/s(i+1);   }\label{A1.10} 
 {\tt end }
\end{tabbing}
\end{table}
%%%%%%%%%%%%%%%%%%%%%%%%%%%%%%%%%%%%%%%%%%%%%%%%%%%%%%%%%%%%%%%%%
\begin{table}[h]
  \caption{Matlab function   {\tt DScaleSV2.m}}
  \label{tab:Alg1.0}
%\begin{enumerate}[label={\bf C.\arabic*}]
\begin{tabbing}
 {\tt func}\={\tt tion}\={\tt  [b,c] = DScaleSV2(a,b,c,n) }\\
 {\tt \% transformation of the unsymmetric tridiagonal matrix Tof size n}\\
 {\tt \% into  a similar  symmetric tridiagonal matrix};\\
{\tt \% Input:}\\
{\tt \%       a=diag(}$ \hat{T}_n${\tt ,1);}\\
{\tt \%       b=diag(}$ \hat{T}_n${\tt );}\\
{\tt \%       c=diag(}$ \hat{T}_n${\tt ,-1);}\\
{\tt \%       n : size of} $ \hat{T}_n$;\\
{\tt \%       Output:}\\
{\tt \%       b=diag(}$ {T}_n${\tt );}\\
{\tt \%       c=diag(}$ {T}_n${\tt ,-1);}\\
 {\tt for i=1:n-1  }\\
\> {\tt a(i)=sqrt(a(i)*c(i+1));     }\\
 \>{\tt c(i+1)=a(i); }\\ 
 {\tt end }
\end{tabbing}
\end{table}
\begin{table*}[h]
  \caption{{\tt Matlab} function   {\tt tridEHbackwardV.m}}
  \label{tab:trid}
%\begin{enumerate}[label={\bf D.\arabic*}]
\begin{tabbing}
 {\tt func}\={\tt tio}\={\tt n[a,b,c]}\= ={\tt tridEHbackwardV(a,b,c,d,n) }\\
{\tt \%   reduction of the lower Hessenberg matrix} $ \hat{H}_n$\\
{\tt \%   into the similar  similar tridiagonal one} $ \hat{T}_n$\\
{\tt \%   Input: }\\
{\tt \%   a =diag(}$ \hat{H}_n,1${\tt );}\\
{\tt \%   b=diag(}$ \hat{H}_n${\tt );}\\
{\tt \%   c =diag(}$ \hat{H}_n,-1${\tt );}\\
{\tt \%   d=diag(}$ \hat{H}_n,-2${\tt );}\\
{\tt \%   n : size of }$ \hat{H}_n$;\\
{\tt \%   Output:}\\
{\tt \%   a =diag(}$ \hat{T}_n,1${\tt );}\\
{\tt \%   b=diag(}$ \hat{T}_n${\tt );}\\
{\tt \%   c =diag(}$ \hat{T}_n,-1${\tt );}\\
% {\tt \% transformation of $ \hat{H}_n$  into  $ \hat{T}_n$ }\\
 {\tt for i=n:-1:3  } \\
 \>{\tt h=d(i)/c(i);     } \\
 \>{\tt d(i)=0;\;\;c(i-1)=c(i-1)-h*b(i-1);\; b(i-2)=b(i-2)-h*a(i-2);}\\
%\>{\tt  } \\
 \>{\tt t=d(i-2)*h;} \\% ;\;
 \> {\tt d(i-1)=d(i-1)+h*c(i-2);\;\;c(i-1)=c(i-1)+h*b(i-2);\;b(i-1)=b(i-1)+h*a(i-2);}\\
%\>{\tt }\\
 \>{\tt for j=i-1:-2:4}\\
 \>\>{\tt h=t/d(j);}\\ %\;
\>\> {\tt d(j-1)=d(j-1)-h*c(j-1);\;c(j-2)=c(j-2)-h*b(j-2);\; b(j-3)=b(j-3)-h*a(j-3);}\\
%\>\>{\tt}\\
\>\> {\tt t=d(j-3)*h;}\\
 \>\>{\tt d(j-2)=d(j-2)+h*c(j-3);\;c(j-2)=c(j-2)+h*b(j-3);\; b(j-2)=b(j-2)+h*a(j-3);}\\
%\>\>{\tt}\\
		\> {\tt end }	\\	
 {\tt end }
\end{tabbing}
\end{table*}
%%%%%%%%%%%%%%%%%%%%%%%%%%%%%%%%%%%%%%%%%%%%%%%%%%%%%%%%%%%%%%%%%%
\small
\begin{table*}[h]
  \caption{{\tt Matlab} function   {\tt EA\_method.m}}
  \label{tab:AlgEA}
%\begin{enumerate}[label={\bf E.\arabic*}]
 \begin{tabbing}
 {\tt func}\={\tt tion}\={\tt [x,}\={\tt w1, w2,}\={\tt ier}\={\tt \; = EA\_method(a,b,c,d,x,tol,F,S,n)}\\
  {\tt \%  Ehrlich-Aberth Method for computing the zeros  and the left }\\
  {\tt \% and right eigenvector of the Hessenberg matrix  associated to MOPs}\\
 {\tt \%  and the nodes and weights of the simultaneous Gauss rule}\\
 {\tt \%  Input:}\;
 {\tt    a =diag(}$ \hat{H}_{n,n+1}${\tt );}\;\;
 {\tt   b=diag(}$ \hat{H}_{n,n+1}${\tt );}\;\;
 {\tt    c =diag(}$ \hat{H}_{n,n+1}${\tt );}\;\;
 {\tt    d=diag(}$ \hat{H}_{n,n+1},-2${\tt );}\\
 {\tt    x:  approximation of eigenvalues of} $ \hat{H}_n$\;\;
 {\tt    tol: tolerance}\;\;
 {\tt    n : size of} $ \hat{H}_n$\\
 {\tt \%   S: first two diagonal entries of} $ S_n$\;\;
 {\tt    F:} $ 2\times 2 $ {\tt matrix used for computing the weights}\\
 {\tt \% Output:}\\
 {\tt \%  x, w1, w2: nodes and weights of the simultaneous Gauss rule}\\
 %{\tt \%  weights of the simultaneous Gauss rule}\\
 {\tt \%  ier: ier is set to zero for normal return, ier is set to j if the j-th eigenvalue}\\
{\tt \% has not been determined after 30 iterations.}\\
  {\tt x0=x; \;}{\tt m=n; n=n+1;}\;
  {\tt conv=zeros(m,1);}\;
  {\tt iter=0; ier=0;}\;
  {\tt iterj=zeros(m,1);}\\
  {\tt   while sum(conv)<m \& ier }$\sim${\tt =0,}\\
\> {\tt        iter=iter+1;}\\
\>{\tt       for j=1:m}\\
\>\> {\tt                x0=x(j);}\\
\>\> {\tt               if conv(j)==0,}\\
\>\>\> {\tt                [p0,p1] = one\_step\_Newton(a,b,c,d,x0,n);}\\
\>\>\> {\tt                   p01=p0(n)/p1(m);}\\
\>\>\> {\tt                         sum1=0;}\\
\>\>\> {\tt                        for k=1:m}\\
\>\>\>\>{\tt                                if k}$\sim${\tt =j}\\
\>\>\>\>\> {\tt                                  sum1=sum1+1/(x(j)-x(k));}\\
\>\>\>\> {\tt                                  end \% if}\\
\>\>\>{\tt                          end \% for}\\
\>\>\> {\tt              t=x(j)-p01/(1-p01*sum1);}\\
\>\>\> {\tt             x(j)=t;}\\
\>\>\> {\tt              if abs(p01)< tol | abs(p0(n)) < tol,}\\
\>\>\>\> {\tt                               conv(j)=1;}\\
{\tt \% computation of the right eigenvector} $v$ {\tt associated with} $ x_j$\\
\>\>\> {\tt                 v=p0(1:m);}\\
{\tt \%  computation of the left eigenvector} $u$ {\tt associated with} $ x_j$ \\
\>\>\> {\tt                 [u]=left\_eigvect(x(j),a,b,c,d,m);}\\
\>\>\> {\tt                   w1(j)=v(1)*(F(1,1)*u(1))/dot(u,v);}\\
\>\>\>{\tt                  w2(j)=S(1)*v(1)*(F(2,1)*u(1)/S(1)+F(2,2)*u(2)/S(2))/dot(u,v);}\\
\>\>\> {\tt                  iterj(j)=iter;}\\
\>\>{\tt                        else}\\
\>\>\> {\tt                            x0=x(j);}\\ 
\>\>{\tt                       end \% if}\\
\> {\tt                  end \%if} \\
\> {\tt         if iter == 30,}\\
\>\> {\tt         ier=j;} \\
\>{\tt        end \%if} \\
 {\tt end \% while }
\end{tabbing}
\end{table*}
%%%%%%%%%%%%%%%%%%%%%%%%%%%%%%%%%%%%%%%%%%%%%%%%%%%%%%%%%%%%%%%%%%%
\begin{table*}[h]
  \caption{{\tt Matlab} function   {\tt one\_step\_Newton.m}}
  \label{tab:AlgG}
%\begin{enumerate}[label={\bf F.\arabic*}]
  \begin{tabbing}
 {\tt func}\={\tt tion}\={\tt   [p,p1] = one\_step\_Newton(a,b,c,d,x,n)}\\
  {\tt \% computation of the sequence of  MOPs and their derivatives evaluated in} $ x$\\
	 {\tt \% Input:}\\
 {\tt \%   a =diag(}$ \hat{H}_{n,n+1},1$,{\tt );}\\
 {\tt \%   b=diag(}$ \hat{H}_{n,n+1}${\tt );}\\
 {\tt \%   c =diag(}$ \hat{H}_{n,n+1},-1${\tt );}\\
 {\tt \%   d=diag(}$ \hat{H}_{n,n+1},-2${\tt );}\\
 {\tt \%   x:  approximation of an eigenvalue of} $ \hat{H}_n$\\
 {\tt \%   n : size of} $ \hat{H}_n$\\
 {\tt \%   Output:}\\
 {\tt \%   p, vector} $ [p_0(\lambda), p_1(\lambda),\ldots,p_n(\lambda)]^T$\\
 {\tt \%   p10, vector} $ [p'_1(\lambda), p'_2(\lambda),\ldots,p'_n(\lambda)]^T$\\
 % {\tt \% and their derivatives evaluated in x}\\
  {\tt m=n; \;\;n=n+1;\;\;b=b-x;\;\;a0=a;\;\;b0=b;\;\;c0=c;\;\;d0=d;\;\; p=zeros(n,1);}\\
 % {\tt b=b-x;\;\;a0=a;\;\;b0=b;\;\;c0=c;\;\;d0=d;\;\; p=zeros(n,1); }\\
 % {\tt}\\
 {\tt \% transformation of the} $ \hat{H}_{n,n+1}$  {\tt  into a lower tridiagonal matrix}\\
 {\tt \% by means of the multiplication of} $ n $  {\tt  Givens rotations to the right}\\
 %{\tt \% in order to annihilate the first superdiagonal of } $ \hat{H}_{n,n+1}$\\
  {\tt for i=1:m-3}\\
 \> {\tt  G=givens\_GV(b(i),a(i));\;\;CS1(i,1)=G(1,1);\;\;CS1(i,2)=G(1,2);}\\
 \> {\tt  V1=[b(i),a(i)]*G';\;\;b(i)=V1(1);\;\;a(i)=V1(2);}\\
 \> {\tt  V1=[c(i+1),b(i+1)]*G';\;\;c(i+1)=V1(1);\;\;b(i+1)=V1(2);}\\
 \> {\tt  V1=[d(i+2),c(i+2)]*G';\;\;d(i+2)=V1(1);\;\;c(i+2)=V1(2); }\\
 \> {\tt  V1=[0,d(i+3)]*G';\;\;d(i+3)=V1(2);}\\
  {\tt end}\\
  {\tt i=m-2;\;\;  G=givens\_GV(b(i),a(i));\;\;CS1(i,1)=G(1,1);\;\;CS1(i,2)=G(1,2);}\\
 % {\tt}\\
  {\tt V1=[b(i),a(i)]*G';\;\;b(i)=V1(1);\;\;a(i)=V1(2);}\\
  {\tt V1=[c(i+1),b(i+1)]*G';\;\;c(i+1)=V1(1);\;\;b(i+1)=V1(2);}\\
  {\tt V1=[d(i+2),c(i+2)]*G';\;\;d(i+2)=V1(1);\;\;c(i+2)=V1(2);}\\
  {\tt i=m-1;\;\;G=givens\_GV(b(i),a(i));\;\;CS1(i,1)=G(1,1);\;\;CS1(i,2)=G(1,2);}\\
 %{\tt }\\
  {\tt V1=[b(i),a(i)]*G';\;\;b(i)=V1(1);\;\;a(i)=V1(2);}\\
  {\tt V1=[c(i+1),b(i+1)]*G';\;\;c(i+1)=V1(1);\;\;b(i+1)=V1(2);}\\
 
  {\tt i=m;\;\;G=givens\_GV(b(i),a(i));\;\;CS1(i,1)=G(1,1);\;\;CS1(i,2)=G(1,2);}\\
 % {\tt }\\
  {\tt V1=[b(i),a(i)]*G';\;\;b(i)=V1(1);\;\;a(i)=V1(2);}\\
  {\tt p(n)=CS1(n-1,1);\;\;\tt t=1;}\\
 % {\tt t=1;}\\
  {\tt for i=n-1:-1:2,}\\
 \> {\tt  t=-t*CS1(i,2);\;\;p(i)=CS1(i-1,1)*t;}\\
   %{\tt  p(i)=CS1(i-1,1)*t;}\\
  {\tt end}\\
  {\tt p(1)=-t*CS1(1,2);}\\
  %{\tt \% computation of the sequence of the derivative of the orthogonal}\\
  %{\tt \% polynomials evaluated in x}\\
	 {\tt \% computation of } $\hat{p}_j^{'}(x), \; j=1,\ldots,n$\\
  {\tt p1=zeros(m,1);}\\
  {\tt p1(1)=p(1)/a0(1);\;\;p1(2)=(p(2)-b0(2)*p1(1))/a0(2);}\\
  %{\tt p1(2)=(p(2)-b0(2)*p1(1))/a0(2);}\\
  {\tt p1(3)=(p(3)-c0(3)*p1(1)-b0(3)*p1(2))/a0(3);}\\
  {\tt for i=4:m}\\
 \> {\tt  p1(i)=(p(i)-d0(i)*p1(i-3)-c0(i)*p1(i-2)-b0(i)*p1(i-1))/a0(i);}\\
  {\tt end}
\end{tabbing}
\end{table*}

%%%%%%%%%%%%%%%%%%%%%%%%%%%%%%%%%%%%%%%%%%%%%%%%%%%%%%%%%%%%%%%%%%%
\begin{table*}[h]
  \caption{{\tt Matlab} function   {\tt left\_eigvect.m}}
  \label{tab:eigvect}
\begin{tabbing}%[label={\bf G.\arabic*}]
  {\tt func}\={\tt tion}\={\tt   [u1]=left\_eigvect(x,a,b,c,d,n);}\\
  {\tt \% computation of the left eigenvector of $ \hat H_n$ corresponding to the eigenvalue $ x$}\\
	 {\tt \% Input:}\\
 {\tt \%   a=diag(}$ \hat{H}_{n},1$,{\tt );}\\
 {\tt \%   b=diag(}$ \hat{H}_{n}${\tt );}\\
 {\tt \%   c=diag(}$ \hat{H}_{n},-1${\tt );}\\
 {\tt \%   d=diag(}$ \hat{H}_{n},-2${\tt );}\\
 {\tt \%   x: eigenvalue of} $ \hat{H}_n$\\
 {\tt \%   n: size of} $ \hat{H}_n$\\
 {\tt \%   Output:}\\
{\tt \%  u, the left eigenvector of }$ \hat{H}_{n}$ {\tt  corresponding to the eigenvalue $ x$}\\
  {\tt b=b-x;}\\
  {\tt for i=n-1:-1:3}\\
  \>{\tt    G=givens\_GV(b(i+1),a(i));}\\
  \>{\tt     cs(i,1)=G(1,1);\;\;cs(i,2)=G(1,2);}\\
  \>{\tt     V=G'*[d(i) c(i) b(i) a(i); 0 d(i+1) c(i+1) b(i+1)];}\\
 \>{\tt     d(i)=V(1,1);\;\;c(i)=V(1,2);\;\;b(i)=V(1,3);\;\;a(i)=V(1,4);}\\
 \> {\tt     d(i+1)=V(2,2);\;\;c(i+1)=V(2,3);\;\;b(i+1)=V(2,4);}\\
  {\tt end}\\
  {\tt i=2;}\\
  {\tt G=givens\_GV(b(i+1),a(i));}\\
  {\tt cs(i,1)=G(1,1);\;\;cs(i,2)=G(1,2);}\\
  {\tt V=G'*[c(i) b(i) a(i); d(i+1) c(i+1) b(i+1)];}\\
  {\tt c(i)=V(1,1);\;\;b(i)=V(1,2);\;\;a(i)=V(1,3);}\\
  {\tt d(i+1)=V(2,1);\;\;c(i+1)=V(2,2);\;\;b(i+1)=V(2,3);}\\
  {\tt i=1;}\\
  {\tt G=givens\_GV(b(i+1),a(i));}\\
  {\tt cs(i,1)=G(1,1);\;\;cs(i,2)=G(1,2);}\\
  {\tt V=G'*[b(i) a(i);c(i+1) b(i+1)];}\\
  {\tt b(i)=V(1,1);\;\;a(i)=V(1,2);}\\
  {\tt c(i+1)=V(2,1);\;\;b(i+1)=V(2,2);}\\
  {\tt t=1; u=zeros(n,1);}\\
  {\tt for i=1:n-1}\\
\>{\tt  u(i)=t*cs(i,1);}\\
\>{\tt  t=-t*cs(i,2);}\\
  { \tt end}\\
  { \tt u(n)=t;}
\end{tabbing}
\end{table*}

%%%%%%%%%%%%%%%%%%%%%%%%%%%%%%%%%%%%%%%%%%%%%%%%%%%%%%%%%%%%%%%%%%%
\begin{table*}[h]
  \caption{{\tt Matlab} function   {\tt givens\_GV.m}}
  \label{tab:GV}
\begin{tabbing}%[label={\bf G.\arabic*}]
  {\tt func}\={\tt tion}\={\tt   [u1]=}\={\tt givens\_GV(x,a,b);}\\
  {\tt \%  computation of the Givens rotation}  $G=[c\; \;s;-s \;\; c];$ \\
{\tt	\%  such that  $  G[a;b]=[\sqrt{a^2+b^2};\;0]$}\\
  {\tt if b == 0}\\
	{\tt c=1; \; s=0; }\\
	 \>{\tt else}\\
	 \>\>{\tt if abs(b) > abs(a)} \\
 \>\>\> {\tt tau = -a/b;}\\
\>\>\> {\tt s = 1/sqrt(1+tau\^{}2);}\\
\>\>\> {\tt c = s * tau;}\\
\>\> {\tt else} \\
\>\>\> {\tt tau = -b/a;}\\
\>\>\> {\tt c = 1/sqrt(1+tau\^{}2); }\\
\>\>\> {\tt s = c * tau;}\\
\>\> {\tt end} \\
\> {\tt end} \\
\>{\tt    G=[c -s; s  c] ;}\\
  { \tt end}\\
\end{tabbing}
\end{table*}

\end{document}